\def\resetMathstrut@{%
  \setbox\z@\hbox{%
    \mathchardef\@tempa\mathcode`\(\relax
    \def\@tempb##1"##2##3{\the\textfont"##3\char"}%
    \expandafter\@tempb\meaning\@tempa \relax
  }%
  \ht\Mathstrutbox@1.2\ht\z@ \dp\Mathstrutbox@1.2\dp\z@
}
\renewcommand{\le}{\leqslant}
\renewcommand{\ge}{\geqslant}
\renewcommand{\setminus}{\smallsetminus}
\renewcommand{\gamma}{\upgamma}
\newcommand{\ud}[0]{\,\mathrm{d}}
\newcommand{\imb}{\text{\tt Imbalance}}
\newcommand{\bd}{{\boldsymbol\delta}}
\newcommand{\cc}{\mathsf{c}}
\newcommand{\ZZ}{\mathbf{Z}}
\newcommand{\NN}{\mathcal{N}}
\newcommand{\opt}{\mathrm{opt}}
\newcommand{\proj}{\mathsf{Proj}}
\newcommand{\sign}{\mathrm{sign}}
\newcommand{\n}{\{1,\ldots,n\}}
\newcommand{\cH}{\mathcal{H}}
\newcommand{\cL}{\mathcal{L}}
\newcommand{\X}{\mathbf X}
\newcommand{\Y}{\mathbf Y}
\newcommand{\f}{\varphi}
\renewcommand{\d}{\delta}
\newcommand{\e}{\varepsilon}
\newcommand{\R}{\mathbb R}
\newcommand{\1}{\mathbf 1}
\newtheorem{theorem}{Theorem}
\newtheorem{lemma}[theorem]{Lemma}
\newtheorem{proposition}[theorem]{Proposition}
\newtheorem{corollary}[theorem]{Corollary}
\newtheorem{definition}[theorem]{Definition}
\theoremstyle{remark}
\newtheorem{remark}[theorem]{Remark}
\newcommand{\MM}{\mathcal{M}}
\newtheorem{question}[theorem]{Question}
\newcommand{\EE}{\mathbf{E}}
\renewcommand{\subset}{\subseteq}
\newcommand{\E}{\mathbb{ E}}
\newcommand{\W}{\mathsf{W}}
\newcommand{\N}{\mathbb N}
\newcommand{\eqdef}{\stackrel{\mathrm{def}}{=}}
\newcommand{\Lip}{\mathrm{Lip}}
\newcommand{\BM}{\mathrm{BM}}
\newcommand{\ee}{\mathsf{e}}
\begin{document}

\title{Impossibility of almost extension}


\author{Assaf Naor}
\address{Mathematics Department\\ Princeton University\\ Fine Hall, Washington Road, Princeton, NJ 08544-1000, USA}
\email{naor@math.princeton.edu}
\thanks{Supported in part by the BSF, the Packard Foundation and the Simons Foundation. The research that is presented here was conducted under the auspices of the Simons Algorithms and Geometry (A\&G) Think Tank.}

\maketitle

\begin{abstract}

Let  $(\X,\|\cdot\|_\X), (\Y,\|\cdot\|_\Y)$ be normed spaces with $\dim(\X)=n$. Bourgain's almost extension theorem  asserts that for any $\e>0$, if  $\NN$ is an $\e$-net of the unit sphere of $\X$ and $f:\NN\to \Y$ is $1$-Lipschitz, then there exists an $O(1)$-Lipschitz   $F:\X\to \Y$ such that  $\|F(a)-f(a)\|_\Y\lesssim  n\e$ for all $a\in \mathcal{N}$. We prove  that this is optimal up to lower order factors, i.e., sometimes  $\max_{a\in \NN} \|F(a)-f(a)\|_\Y\gtrsim n^{1-o(1)}\e$ for {\em every} $O(1)$-Lipschitz  $F:\X\to \Y$. This improves  Bourgain's lower bound of $\max_{a\in \NN} \|F(a)-f(a)\|_\Y\gtrsim n^{c}\e$ for some  $0<c<\frac12$. If $\X=\ell_2^n$, then the approximation in the almost extension theorem can be improved to $\max_{a\in \NN} \|F(a)-f(a)\|_\Y\lesssim  \sqrt{n}\e$. We prove that this is sharp, i.e.,  sometimes  $\max_{a\in \NN} \|F(a)-f(a)\|_\Y\gtrsim \sqrt{n}\e$ for {\em every} $O(1)$-Lipschitz $F:\ell_2^n\to \Y$.

\end{abstract}

\section{Introduction}

The following theorem was proved by Bourgain in~\cite{Bou87}; see~\cite{Beg99} for a simpler proof.
\begin{theorem}[Bourgain's almost extension theorem~\cite{Bou87}]\label{thm:bourgain} There exists a universal constant $C\ge 1$ with the following property. Fix $n\in \N$, $\e>0$ and  $L> C$. Suppose that $(\X,\|\cdot\|_\X)$ is an $n$-dimensional normed space and that $\mathcal{N}$ is an $\e$-net of the unit sphere of $\X$. Let $(\Y,\|\cdot\|_\Y)$ be a Banach space, and let $f:\mathcal{N}\to \Y$ be a $1$-Lipschitz mapping. Then, there exists an $L$-Lipscitz mapping $F:\X\to \Y$ satisfying\footnote{In addition to the usual $o(\cdot),O(\cdot)$ notation, we will also use  the following (standard) asymptotic notation. For $P,Q>0$, the notations
$P\lesssim Q$ and $Q\gtrsim P$  mean that $P\le KQ$ for a
universal constant $K>0$. The notation $P\asymp Q$
stands for $(P\lesssim Q) \wedge  (Q\lesssim P)$.} \begin{equation}\label{eq:bourgain conclusion}\max_{a\in \NN}\|F(a)-f(a)\|_\Y\lesssim \frac{n}{L}\e.\end{equation} 
\end{theorem}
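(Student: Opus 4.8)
The plan is to follow the Bourgain--Begun route: extend $f$ off the net by a mollification at a scale $r$ tuned against $L$, with the dimension $n$ entering through a volumetric estimate on the mollification window, after first reducing the whole problem to a thin shell around the unit sphere.

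\emph{Reductions.} Since $\diam f(\NN)\le\diam\NN\le2$ I may translate the target so that $\|f(a)\|_\Y\le2$ for every $a\in\NN$, and, replacing $\NN$ by a maximal $\tfrac\e2$-separated subset, I may assume $\NN$ is $\tfrac\e2$-separated (it is still an $O(\e)$-net of $S_\X$ and $f$ is still $1$-Lipschitz). It then suffices to build an $O(L)$-Lipschitz $F$ on the shell $\Sigma_r=\{x\in\X:1-r\le\|x\|_\X\le1\}$ that satisfies the approximation at the points of $\NN$: one extends $F$ to the inner ball by the cone map $x\mapsto\frac{\|x\|_\X}{1-r}F\big((1-r)\frac{x}{\|x\|_\X}\big)$, which is $O(L)$-Lipschitz once $\|F\|_\infty\lesssim1$, and then to all of $\X$ by precomposing with the $2$-Lipschitz radial retraction $x\mapsto x/\max(1,\|x\|_\X)$ onto $B_\X$; neither step alters the values on $\NN\subseteq S_\X$. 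Every map I will produce has the form $F(x)=\sum_{a\in\NN}\theta_a(x)f(a)$ with $\theta_a\ge0$ and $\sum_a\theta_a\equiv1$, so using $\sum_a(\theta_a(x)-\theta_a(y))=0$ one gets $\|F(x)-F(y)\|_\Y\le\sum_a|\theta_a(x)-\theta_a(y)|\cdot\|f(a)-v\|_\Y$ for any $v\in\Y$; thus all estimates reduce to the scalar weights and the Banach target plays no role.

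\emph{Construction and estimates.} Set $r\asymp\tfrac nL\e$, fix a parameter $\eta\in(0,1)$ and a $1$-Lipschitz profile $\varphi\colon[0,\infty)\to[0,1]$ with $\varphi\equiv1$ on $[0,1-\eta]$ and $\varphi\equiv0$ on $[1,\infty)$ (so $\Lip\varphi\asymp1/\eta$), and put $\psi_a(x)=\varphi(\|x-a\|_\X/r)$, $\Psi=\sum_a\psi_a$, $\theta_a=\psi_a/\Psi$, $F=\sum_a\theta_a f(a)$ on $\Sigma_r$, enlarging $r$ by a constant factor and shrinking $\eta$ so that $\Psi\ge1$ throughout $\Sigma_r$. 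The approximation is immediate: $\|F(a)-f(a)\|_\Y\le\max\{\|a-b\|_\X:\theta_b(a)\neq0\}\lesssim r\asymp\tfrac nL\e$. For the Lipschitz bound it is enough to control $\|F(x)-F(y)\|_\Y$ when $\|x-y\|_\X\le\e$ and then chain; writing $F(x)-F(y)=\sum_a(\theta_a(x)-\theta_a(y))(f(a)-f(a_0))$ for $a_0$ a net point near $x$, one has $\|f(a)-f(a_0)\|_\Y\lesssim r$ on the supports that matter, while a direct computation with $\theta_a=\psi_a/\Psi$ gives $\sum_a|\theta_a(x)-\theta_a(y)|\lesssim\frac{1}{\Psi(y)}\cdot\frac{M}{\eta r}\|x-y\|_\X$, where $M$ counts the net points within $O(r)$ of $x$ and $\Psi(y)$ bounds below the number within $(1-\eta)r$ of $y$. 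Since $\NN$ is $\tfrac\e2$-separated on the hypersurface $S_\X$, an area count gives $M/\Psi(y)\lesssim(1-\eta)^{-(n-1)}\lesssim e^{\eta n}$, whence $\Lip F\lesssim e^{\eta n}/\eta$; taking $\eta\asymp1/n$ yields $\Lip F\lesssim n$. With $r\asymp\e$ this already proves the theorem for $L\asymp n$, and for $L\gtrsim n$ it is even easier (mollify at the finest available scale, or interpolate $f$ on a suitable triangulation with vertex set $\NN$).

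\emph{The remaining range and the main obstacle.} To force $\Lip F\le L$ in the full range $C<L<n$ one cannot stop at a mollification, which is stuck at $\Lip\asymp n$; here some genuine averaging must enter, and I would first try averaging the construction over random low-dimensional coordinate sections (or random rotations) of $\X$, the averaging being responsible for the loss factor $n/L$. This is the hard part. In every case the crux is the Lipschitz estimate: naively $\Lip F$ is comparable to the overlap of the cover, which is exponential in $n$, and the reduction to a polynomial bound uses two things at once -- the exact cancellation $\sum_a\nabla\theta_a\equiv0$, so that only the increments $f(a)-f(a_0)$ of size $\lesssim r$ are ever seen, and the choice $\eta\asymp1/n$, which makes the collar $\{(1-\eta)r\le\|\cdot-a\|_\X\le r\}$ of each window contain only an $O(1)$-factor more net points than its bulk. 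Keeping the underlying area counts on $S_\X$ uniform (they degrade near flat facets and sharp vertices of $B_\X$) and patching the shell construction with the cone extension near the origin are the technical points that need care.
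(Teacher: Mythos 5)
This theorem is not proved in the paper at all: it is quoted from~\cite{Bou87}, with~\cite{Beg99} cited for a simpler proof, so there is no ``paper's own proof'' to compare against. Judged on its own terms, your proposal has a genuine gap, in fact two.

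First, the partial result you do claim ($\Lip F\lesssim n$ via the partition of unity $\theta_a=\psi_a/\Psi$) rests on a false overlap estimate. You assert that ``an area count gives $M/\Psi(y)\lesssim(1-\eta)^{-(n-1)}$,'' where $M$ counts net points within $O(r)$ of $x$ and $\Psi(y)$ is bounded below by the number within $(1-\eta)r$ of $y$. But the number of points of an $\e$-net inside a ball of radius $\rho$ is only pinned down between a covering-type lower bound $(c\rho/\e)^{n-1}$ and a packing-type upper bound $(C\rho/\e)^{n-1}$ with $c<C$ universal constants; the ratio of the two is $\ge(C/c)^{n-1}$, which is exponential in $n$ no matter how you choose $\eta$ (and the two balls are not even concentric). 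So the multiplicity of your cover really is $e^{\Theta(n)}$, the choice $\eta\asymp1/n$ does not rescue it, and the conclusion $\Lip F\lesssim e^{\eta n}/\eta\lesssim n$ is not established. (For $L\gtrsim n$ the theorem does hold for a different reason -- one can invoke the genuine Lipschitz extension theorem $\ee(\X)\lesssim\dim(\X)$ of~\cite{JLS86}, quoted in~\eqref{eq:quote JLS}, and take error $0$ -- but that is not the argument you gave.)

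Second, and more importantly, you explicitly concede the entire content of the theorem. The regime that matters, and the one used in every application discussed in the paper, is $L=O(1)$; for it you only write that ``some genuine averaging must enter'' and that you ``would first try'' averaging over random sections. That averaging is not an optional refinement of the mollification -- it is the proof. In Bourgain's and Begun's arguments one does not form a partition of unity at all: one fixes a (discontinuous) nearest-net-point selection $x\mapsto a(x)$ and defines $F(x)$ as the average of $f(a(x+h))$ over $h$ in a ball of radius $r\asymp n\e/L$. The Lipschitz constant is then controlled by the \emph{volume} of the symmetric difference of two translates of that ball, $\mathrm{vol}\bigl(B\triangle(B+h)\bigr)/\mathrm{vol}(B)\lesssim n\|h\|/r$, multiplied by the oscillation $O(r)$ of $f$ on the relevant window; the factor $n$ in~\eqref{eq:bourgain conclusion} is exactly the $n$ in this volume ratio, and no exponential multiplicity ever appears. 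Without this mechanism (or a substitute for it) the proposal does not prove the theorem for any $L$ below the range where classical extension already suffices.
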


It follows from~\cite{Bou87} (specifically, see the reasoning  immediately after the statement of Theorem~2 in~\cite{Bou87}) that in the setting of Theorem~\ref{thm:bourgain}  we must sometimes have $\max_{a\in \NN} \|F(a)-f(a)\|_\Y\ge n^c\e$ for {\em every} $O(1)$-Lipschitz  function $F:\X\to \Y$, where  $0<c<\frac12$ is a universal constant; we will recall the  approach of~\cite{Bou87} and explain its features and limitations in Section~\ref{sec:ext intro}. Theorem~\ref{thm:l1} improves this lower bound via a route that is entirely different from that of~\cite{Bou87}; it demonstrates that Theorem~\ref{thm:bourgain} is sharp up to lower order factors, e.g.~in the important regime $L=O(1)$ it shows that the conclusion of Theorem~\ref{thm:bourgain} cannot be improved to $\max_{a\in \NN}\|F(a)-f(a)\|_\Y\lesssim n\exp\left(-K\sqrt{\log n}\right)\e$ for some universal constant $K>0$.

In the statement of Theorem~\ref{thm:l1}, and throughout what follows, the (closed) unit ball and unit sphere of a normed space $(\X,\|\cdot\|_\X)$ will be denoted $B_\X\subset \X$ and $S_\X\subset \X$, respectively. Namely,
$$
B_\X\eqdef\big\{x\in \X:\ \|x\|_\X\le 1\big\}\qquad\mathrm{and}\qquad S_\X\eqdef \big\{x\in \X:\ \|x\|_\X=1\big\}.
$$
For every $n\in \N$, the unit sphere of the Euclidean $n$-space $\ell_2^n$  will be denoted as usual $S^{n-1}=S_{\ell_2^n}$.

\begin{theorem}\label{thm:l1} Fix $n\in \N$ and $L\ge 1$ with $5L\le\sqrt[4]{n}$. There are Banach spaces $(\X,\|\cdot\|_\X),(\Y,\|\cdot\|_\Y)$ with $\dim(\X)=n$ and $\e>0$ such that for any    $\e$-net $\mathcal{N}$ of $S_{\X}$ there is  a $1$-Lipschitz mapping $f:\mathcal{N}\to \Y$ that satisfies
\begin{equation}\label{eq:max on N}
\max_{a\in \mathcal{N}} \|F(a)-f(a)\|_\Y\gtrsim \frac{Ln}{e^{2\sqrt{(\log n)\log(4L)}}}\e,
\end{equation}
for {\em every} $L$-Lipschitz mapping $F:S_{\X}\to \Y$.
\end{theorem}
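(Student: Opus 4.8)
The construction takes $\X=\ell_1^n$ and $\Y=L_p$ with
$$
p\eqdef 1+\theta,\qquad \theta\eqdef\sqrt{\frac{\log(4L)}{\log n}},
$$
the hypothesis $5L\le\sqrt[4]{n}$ being exactly what guarantees $0<\theta\le\tfrac12$, i.e.\ $1<p\le\tfrac32$, so that $L_p$ sits in the range where it is a uniformly convex, Markov-type-$2$ space whose relevant constant nonetheless degrades as $p\to 1^+$. The role of taking $p$ slightly above $1$ is that $\ell_1^n$ is carried into $L_p$ very inefficiently at macroscopic scales (distortion $\asymp n^{\theta/(1+\theta)}$) while still being trackable at the scale $\e$ of the net; balancing these two effects against $L$ and optimising $\theta$ is what converts the two competing quantities $n^{\theta}$ and $(4L)^{1/\theta}$ into the single factor $n^{\theta}(4L)^{1/\theta}=e^{2\sqrt{(\log n)\log(4L)}}$ appearing in \eqref{eq:max on N}.

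\textbf{The map.} Fix the standard isometric embedding $\ell_1^n\hookrightarrow L_1$ and follow it by the Mazur map $M(g)=|g|^{1/p}\sign(g)$ into $L_p$; write $\Psi$ for the resulting map on $S_{\ell_1^n}$, which satisfies the two-sided bound $\|\Psi(x)-\Psi(y)\|_p\asymp\|x-y\|_1^{1/p}$ on the range $\|x-y\|_1\le 2$. Let $\e>0$ be a small parameter, to be pinned down only at the end, and put
$$
f\eqdef c\,\e^{\,1-1/p}\,\Psi\restrict{\NN}
$$
for a small universal constant $c$. Since $\NN$ is $\e$-separated and $1-\tfrac1p=\tfrac{\theta}{1+\theta}>0$, for $a,b\in\NN$ we get $\|f(a)-f(b)\|_p\lesssim \e^{1-1/p}\|a-b\|_1^{1/p}\le\|a-b\|_1$, so $f$ is $1$-Lipschitz on $\NN$ for a suitable $c$; crucially this uses \emph{only} the separation of $\NN$, so the same $f$ works for every $\e$-net, as the statement requires. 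Note that the normalisation shrinks the whole image: $\diam f(\NN)\asymp\e^{1-1/p}$, while $f$ still resolves the $\e$-scale geometry faithfully.

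\textbf{The lower bound.} Suppose $F\colon S_{\ell_1^n}\to L_p$ is $L$-Lipschitz with $\max_{a\in\NN}\|F(a)-f(a)\|_p\le D$. The plan is to transfer the pair $(F,f)$ to a suitable combinatorial structure inside $S_{\ell_1^n}$ (a grid on the simplex, carried by a reversible walk of edge-length $\asymp\e$ and mixing time $t\asymp \e^{-1}$ up to logarithmic factors, so that two stationary samples are at $\ell_1$-distance $\asymp 1$ with constant probability) and to run the Markov-type-$2$ inequality of $L_p$ against $F$. Markov type $2$ gives $\EE\|F(u)-F(v)\|_p^2\lesssim K_p^2\,t\,L^2\e^2$ for independent stationary $u,v$; on the other hand, if $D$ is smaller than the right-hand side of \eqref{eq:max on N} then $\|F-f\|\le D$ on $\NN$ forces $\EE\|F(u)-F(v)\|_p^2\gtrsim\e^{2-2/p}$, using $\|f(u)-f(v)\|_p\asymp\e^{1-1/p}$ for a constant fraction of the pairs. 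Since $2-2/p<2$ because $p<2$, for $\e$ below an explicit threshold $\e_0=\e_0(n,L)\asymp(K_p^2\,t\,L^2)^{-p/(2-p)}$ the Markov-type term is beaten, a contradiction; hence $D\gtrsim\e^{1-1/p}$. Choosing $\e\asymp\min\big\{\e_0,\ \big(e^{2\sqrt{(\log n)\log(4L)}}/(Ln)\big)^{p}\big\}$ then yields $D\gtrsim\e^{1-1/p}\gtrsim \frac{Ln}{e^{2\sqrt{(\log n)\log(4L)}}}\,\e$, which is \eqref{eq:max on N}.

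\textbf{Main obstacle.} Everything hinges on the last balancing step: the two quantities in the $\min$ above must be simultaneously admissible, and unwinding this turns out to be precisely an inequality of the shape $e^{2\sqrt{(\log n)\log(4L)}}\lesssim Ln\cdot\big(K_p^{2}\,t\,L^{2}\big)^{-1/(2-p)}$, which is delicate exactly near $L\asymp\sqrt[4]{n}$. Making it hold for the whole admissible range therefore requires (i) the \emph{sharp} dependence on $p-1$ of the Markov-type-$2$ constant $K_p$ of $L_p$ (a naive bound such as $K_p\asymp (p-1)^{-1}$ is not good enough; one needs the correct power, presumably $(p-1)^{-1/2}$), and (ii) the matching sharp mixing-time estimate for the combinatorial structure chosen. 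Establishing this sharp $L_p$-valued Markov/Poincaré inequality — and verifying that a naive edge-by-edge telescoping, which would apply the hypothesis $\|F-f\|\le D$ of order $n$ times and lose a factor $\asymp n$, is genuinely avoided by the averaged formulation — is the crux of the argument.
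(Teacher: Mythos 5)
Your construction is essentially the paper's: $\X=\ell_1^n$, target an $L_q$-space with $q=1+\sqrt{\log(4L)/\log n}$, and $f$ equal to $\e^{1-1/q}$ times a $\tfrac1q$-snowflake embedding of $S_{\ell_1^n}$ restricted to $\NN$, with the $1$-Lipschitz property coming only from the $\e$-separation of the net. (The paper uses coordinatewise powers of the crinkled arc rather than the Mazur map, but it remarks explicitly that the Mazur map would serve equally well.) The problem is entirely in your lower-bound mechanism.

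The gap is that you invoke Markov type $2$ of $L_p$ for $1<p<2$. This is not, as your ``Main obstacle'' paragraph suggests, merely a matter of pinning down the sharp dependence of $K_p$ on $p-1$: the premise is false. Markov type $2$ implies Enflo type $2$ (run the standard walk on the Hamming cube), and for a Banach space Enflo type $2$ implies Rademacher type $2$; but $L_p$ with $1\le p<2$ has Rademacher type exactly $p<2$. So no finite Markov type $2$ constant exists for the target $L_p$, and for finite-dimensional sections the constant grows polynomially in the dimension, which destroys the balancing step on which your whole optimization rests. The invariant that actually does the work is (Enflo) type $q$ of the target, and the paper uses it in its cleanest form: Enflo's min--max inequality $\min_{x\in\{-1,1\}^n}\|\varphi(x)-\varphi(-x)\|\le n^{1/q}\max_{x,j}\|\varphi(x)-\varphi(x_1,\ldots,-x_j,\ldots,x_n)\|$ for $\varphi$ valued in $L_q$, applied to $F$ on the cube $\frac1n\{-1,1\}^n\subset S_{\ell_1^n}$. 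Edges give $2L/n$ from the Lipschitz hypothesis, and each antipodal pair gives $\ge\e^{1-1/q}-2L\e-2\max_{a\in\NN}\|F(a)-f(a)\|_\Y$ from the approximation hypothesis used only at the two diagonal endpoints --- so the telescoping loss you worry about never arises, because the min/max formulation invokes $\|F-f\|\le D$ exactly twice per diagonal, not $n$ times. Optimizing over $\e$ and $q$ then yields \eqref{eq:max on N} with no Markov chain, no mixing-time estimate, and no smoothness constant. If you insist on a Markov-chain route you must downgrade to Markov type $q$ of $L_q$ (which does hold with an $O(1)$ constant); the exponent bookkeeping changes throughout, and the mixing time of the cube walk injects an extra lower-order loss, so at best this reproduces a weaker form of the theorem.
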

Our proof of Theorem~\ref{thm:l1} shows that one could take $\X=\ell_1^n$ and $\Y=\ell_q$, where $q=1+1/ \sqrt{\log_{4L}n}$. While we know how to slightly improve the lower order factor in the right hand side of~\eqref{eq:max on N}, we do not see how to remove it altogether; we suspect that it is possible to do so, but this remains an interesting open question.

The  forthcoming work~\cite{NS18-extension} obtains a refinement of Theorem~\ref{thm:bourgain} that provides  asymptotic improvements (as $n\to \infty$) of~\eqref{eq:bourgain conclusion} under further restrictions on  the source space $(\X,\|\cdot\|_\X)$; in the particularly significant Euclidean special case $\X=\ell_2^n$, the improvement of ~\cite{NS18-extension} is the following theorem.

\begin{theorem}[almost extension from Euclidean nets~\cite{NS18-extension}]\label{thm:NS} There is a universal constant $C\ge 1$ with the following property. Fix $n\in \N$, $\e>0$ and  $L> C$. Suppose  that $\mathcal{N}\subset S^{n-1}$ is an $\e$-net of $S^{n-1}$. Let $(\Y,\|\cdot\|_\Y)$ be a Banach space, and let $f:\mathcal{N}\to \Y$ be  $1$-Lipschitz. Then, there is an $L$-Lipscitz mapping $F:\ell_2^n\to \Y$ satisfying \begin{equation}\label{eq:sqrt in theorem}
\max_{a\in \NN} \|F(a)-f(a)\|_\Y\lesssim \frac{\sqrt{n}}{L}\e.\end{equation}
\end{theorem}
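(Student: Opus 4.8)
\textbf{Proof proposal for Theorem~\ref{thm:NS}.} The plan is to follow the overall strategy of the proof of Theorem~\ref{thm:bourgain} (as streamlined in~\cite{Beg99}), but to replace every place where the argument uses an estimate that is governed by the dimension $n$ by a sharper Euclidean estimate that is governed by $\sqrt{n}$. First I would reduce to constructing $F$ on a neighbourhood of the sphere: if $F$ is defined and $O(L)$-Lipschitz on an annular region $U\supset S^{n-1}$, then precomposing with an $O(1)$-Lipschitz radial retraction of $\ell_2^n$ onto $U$ extends it to all of $\ell_2^n$ while changing the Lipschitz constant and the error on $\NN$ only by absolute constants. After translating the image of $f$ so that $\|f(a)\|_\Y\le 1$ for all $a\in\NN$ (using $\diam f(\NN)\le 2$), fix a measurable choice $\pi$ of a nearest point of $\NN$ to $x/\|x\|_2$ and set $\bar f:=f\circ\pi$, a bounded (by $1$) measurable ``crude extension'' with $\|\bar f(x)-\bar f(y)\|_\Y\le \|x/\|x\|_2-y/\|y\|_2\|_2+2\e$.

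The heart of the construction is a random smoothing of $\bar f$ that outputs convex combinations of values of $f$ (hence stays in $\Y$), built from a distribution over partitions of $\ell_2^n$ into pieces of diameter $\lesssim r$, where $r$ is a resolution parameter to be tuned with $r/\e\asymp \sqrt n/L$. The single feature of these partitions that I would extract is the \emph{Euclidean padded-decomposability bound}: one can arrange that the probability a given pair $x,y$ is split by the scale-$r$ partition is $\lesssim \sqrt n\,\|x-y\|_2/r$, the factor $\sqrt n$ (rather than $n$) being exactly what separates $\ell_2^n$ from a general $n$-dimensional normed space, and being in turn the reason the final Lipschitz/error trade-off improves from $n\e$ to $\sqrt n\,\e$. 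To each $x$ near $\NN$ one attaches the barycenter in $\Y$ of (a suitably wide local average of) $f$ over the net points governed by $x$'s piece, and sets $F(x):=\mathbb E[\text{this value}]$. For $a\in\NN$, $F(a)$ is then an average of barycenters of $f$ over net points within $O(r)$ of $a$, each within $O(r)$ of $f(a)$, so $\|F(a)-f(a)\|_\Y\lesssim r\asymp \tfrac{\sqrt n}{L}\e$, which is the desired error. For the Lipschitz bound, $\|F(x)-F(y)\|_\Y$ is controlled by $\Pr[x,y\text{ split}]$ times the oscillation of the attached representatives; taking the representatives to be not the bare piece-barycenter but a barycenter of $f$ over a Euclidean ball of radius $W\gg r$ about the piece's centre, and using the (again dimension-free-in-the-right-way) fact that such a barycenter moves by only $\asymp \tfrac{\sqrt n}{W}\|\cdot\|_2$ as the centre moves, one would hope to get $\mathrm{Lip}(F)\lesssim \tfrac{\sqrt n}{r}\cdot\tfrac{\sqrt n\,r}{W}=\tfrac{n}{W}\lesssim L$ for $W\asymp n/L$.

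The step I expect to be the genuine obstacle is making this last balancing act actually close. A naive single-scale version of the construction is stuck at $\mathrm{Lip}(F)\asymp \sqrt n$, because the jump across a piece boundary ($\asymp r$) precisely cancels the $1/r$ in the splitting probability; and replacing the piece-barycenter by a radius-$W$ average, while it does lower the Lipschitz constant, tends to inflate the error at net points back up to $\asymp W\gg \tfrac{\sqrt n}{L}\e$. Reconciling ``$\mathrm{Lip}(F)$ as small as $L$'' with ``error as small as $\tfrac{\sqrt n}{L}\e$'' — in particular, recovering the extra factor of $\e$ rather than the easy bound $\tfrac{\sqrt n}{L}$ — is what forces the construction to be run recursively over a hierarchy of nets $\NN_0\subset\NN_1\subset\cdots\subset\NN$ at geometric scales, with $F$ assembled as a telescoping sum of corrections, one per scale, whose sup-norms decay geometrically (because restricting $f$ to one net versus the next changes the crude extension by only the coarser scale) while their contributions to the Lipschitz constant are held in check by the Euclidean estimates above. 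Carrying out this multiscale bookkeeping — and, orthogonally to it, the routine but fiddly radial-direction control needed to keep the smoothing of $x$ inside the region where $\bar f$ is well-behaved and to handle $\|x\|_2$ far from $1$ — is where essentially all of the work of Theorem~\ref{thm:NS} resides.
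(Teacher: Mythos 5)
First, a point of order: this paper does not prove Theorem~\ref{thm:NS} at all --- it quotes it from the forthcoming work~\cite{NS18-extension} and only proves the matching \emph{lower} bound (Theorem~\ref{thm:L2 case}). So there is no in-paper proof to compare your argument against, and your proposal has to be judged on its own terms.

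On those terms there is a genuine gap, and you have in fact located it yourself. You announce that you will follow the Begun-style smoothing, but the construction you then describe is a random-partition (padded decomposition) scheme, and these are structurally different in exactly the respect that matters. A partition-based smoothing bounds $\|F(x)-F(y)\|_\Y$ by $(\text{splitting probability})\times(\text{oscillation of the representatives})\asymp \frac{\sqrt{n}\,\|x-y\|_2}{r}\cdot r=\sqrt{n}\,\|x-y\|_2$, so its Lipschitz constant is pinned at the padding parameter $\sqrt{n}$ of Euclidean space; it cannot be driven down to a prescribed constant $L$ by tuning $r$. Your proposed fixes do not escape this: widening the representative window to radius $W\asymp n/L$ inflates the error at net points to $\asymp W\gg \frac{\sqrt n}{L}\e$ (as you note), and the multiscale telescoping $F=F_0+\sum_k(F_{k+1}-F_k)$ does not help either, because each correction $F_{k+1}-F_k$ is itself a single-scale partition smoothing and so contributes Lipschitz constant $\asymp\sqrt n$ at its own scale; summing $\log(1/\e)$ such terms only makes the constant worse, and geometric decay of the \emph{sup norms} of the corrections is not the quantity that controls $\|F\|_{\mathrm{Lip}}$. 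No mechanism in your sketch converts ``$\sqrt n$'' into ``$L$''.

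The missing ingredient is translation invariance of the averaging measure, which is what the convolution approach of~\cite{Beg99} supplies and a random partition does not. If $\bar f$ is the crude nearest-net-point extension, which is Lipschitz up to an additive error $\e$, and $F(x)=\fint_{\rho B_{\ell_2^n}}\bar f(x+z)\ud z$, then coupling the two averaging measures by the translation $z\mapsto z+(y-x)$ gives $\|F(x)-F(y)\|_\Y\le \|x-y\|_2+\e$ with multiplicative constant $1$, not $\sqrt n$; the Euclidean symmetric-difference estimate $\mathrm{vol}\big((x+\rho B)\triangle(y+\rho B)\big)\lesssim \frac{\sqrt n\,\|x-y\|_2}{\rho}\mathrm{vol}(\rho B)$ --- which is the correct place where $\sqrt n$ replaces the $n$ of a general norm --- enters only in the chaining/telescoping step that removes the additive $\e$, producing $\|F\|_{\mathrm{Lip}}\le 1+O(\sqrt n\,\e/\rho)$ and error $\lesssim\rho$, whence $\rho\asymp\sqrt n\,\e/L$ gives~\eqref{eq:sqrt in theorem}. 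Your proposal never performs this interpolation between the exact-translation bound and the symmetric-difference bound, and without it the stated balancing act cannot close.
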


Here we prove that Theorem~\ref{thm:NS} is sharp up to the value of the implicit universal constant in~\eqref{eq:sqrt in theorem}.

\begin{theorem}\label{thm:L2 case} Fix $n\in \N$ and $L\ge 1$ with $6L\le\sqrt[4]{n}$. There exists $\e>0$ such that for any    $\e$-net $\mathcal{N}$ of $S^{n-1}$ there exists  a Banach space $(\Y,\|\cdot\|_\Y)$ and a $1$-Lipschitz mapping $f:\mathcal{N}\to \Y$ that satisfies
\begin{equation}\label{eq:max on N hilbert}
\max_{a\in \mathcal{N}} \|F(a)-f(a)\|_\Y\gtrsim \frac{\sqrt{n}}{L}\e,
\end{equation}
for {\em every} $L$-Lipschitz mapping $F:S^{n-1}\to \Y$.
\end{theorem}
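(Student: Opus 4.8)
The plan is to pin down the extremal target via a Lipschitz-free space. Fix a basepoint $a_0\in\NN$, take $\Y=\mathcal{F}(\NN)$ to be the Lipschitz-free (Arens--Eells) space of the finite metric space $\NN$, and let $f\colon\NN\to\mathcal{F}(\NN)$ be the canonical isometric embedding $f(a)=\delta_a-\delta_{a_0}$ (so $f$ is $1$-Lipschitz). Since every $1$-Lipschitz map $g$ from $\NN$ into a Banach space $\Y'$ factors as $g=\widehat g\circ f$ with $\|\widehat g\|\le1$, any $L$-Lipschitz $F\colon S^{n-1}\to\mathcal{F}(\NN)$ with $\max_{a\in\NN}\|F(a)-f(a)\|\le\Delta$ yields — via $\widehat g\circ F$ — an $L$-Lipschitz approximate extension of $g$ with the same error $\Delta$; thus $(\mathcal{F}(\NN),f)$ is the hardest instance, and it suffices to choose $\e=\e(n,L)$ so that \emph{every} $L$-Lipschitz $F\colon S^{n-1}\to\mathcal{F}(\NN)$ has $\max_{a\in\NN}\|F(a)-f(a)\|_{\mathcal{F}(\NN)}\gtrsim\sqrt n\,\e/L$. (Parallel to the proof of Theorem~\ref{thm:l1}, one could instead try a concrete target $\ell_q$ with $q=q(n,L)$ close to $1$ together with an $f$ obtained by perturbing an $\ell_1$-type embedding at the net scale; the free space simply spares us from guessing $q$ and $f$.)

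Given such an $F$, set $\Delta:=\max_{a\in\NN}\|F(a)-f(a)\|_{\mathcal{F}(\NN)}$. I would prove $\Delta\gtrsim\sqrt n\,\e/L$ by a duality-plus-averaging argument that quantifies a clash of geometries: the target $\mathcal{F}(\NN)$, being essentially the free space of a Euclidean set, lies only $\asymp\sqrt n$-bi-Lipschitzly inside an $L_1$-space, and hence satisfies a cotype-$2$ inequality with constant of order $\sqrt n$ (and no better), whereas the source $\ell_2^n$ has Enflo type $2$ with an absolute constant. Concretely, one passes (using the natural norm-one surjection $\mathcal{F}(\NN)\twoheadrightarrow\mathcal{F}(\NN')$) to a grid $\NN'\subset\NN$ lying in a flat patch of $S^{n-1}$, so that $\NN'$ is $(1+o(1))$-bi-Lipschitz to a Euclidean grid $\e\Z^{n-1}\cap Q$ in a bounded region $Q\subset\R^{n-1}$ of diameter $\asymp1$. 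On $\NN'$ the map $f$ is an isometry, so it displaces "diagonal" pairs of grid points by $\asymp1$ but $\e$-neighbours only by $\asymp\e$; feeding $F$ into the cotype-$2$ inequality for $\mathcal{F}(\NN')$, and using that $F$ is $L$-Lipschitz (so it moves $\e$-neighbours by $\lesssim L\e$ and diagonal pairs by $\lesssim L$) and stays within $\Delta$ of the isometric $f$, one finds that $F$ cannot reproduce the "$\ell_1$-spread" of $f$ along the $n-1$ coordinate directions at once unless $\Delta\cdot L\gtrsim\sqrt n\,\e$. Dually, this amounts to building, for the given $F$, a single $1$-Lipschitz $\psi\colon\NN\to\R$ that oscillates at scale $\e$ with amplitude spread over the $\asymp n$ almost-orthogonal tangential directions, together with a net point $a$, such that $\psi(a)-\langle\psi,F(a)\rangle\gtrsim\sqrt n\,\e/L$: the gain comes from the Euclidean saving that a single vector of norm $\le L$ can generate only $\asymp L\sqrt n\,\e$ — not $\asymp nL\e$ — of displacement over $n$ orthogonal $\e$-steps. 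The reciprocal shape $\Delta\cdot L\gtrsim\sqrt n\,\e$ of the conclusion is the fingerprint of this trade-off and exactly matches the positive direction of Theorem~\ref{thm:NS}.

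The real work — and the step I expect to be the main obstacle — is making this clash quantitative in the \emph{approximate}, \emph{net-based} regime with the sharp constant. One must (i) keep every estimate stable under perturbations of size $\Delta$, since $F$ only approximates $f$; (ii) cope with the fact that $F$ is constrained, and need only be close to $f$, only on the $\e$-separated set $\NN$, leaving $\Theta(\e)$ of slack everywhere — slack that $F$ will try to use to "smear out" the obstruction, which is precisely what the $\e$-scale oscillation of the test function (equivalently, the net-scale design of the worst $f$) has to defeat; and (iii) organise the sign and frame data coherently over $S^{n-1}$, where the tangential frames unavoidably twist, so that the per-direction surpluses accumulate rather than cancel. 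In short, one has to establish a robust, scale-$\e$-localised version of the fact that Lipschitz extension from Euclidean space into general Banach targets costs a factor $\sqrt{\dim}$ — an inequality of metric-cotype type tailored to the pair consisting of $\ell_2^n$ and the free space of an $\e$-net of its sphere — and it is this inequality, not the reduction above, that carries the content of Theorem~\ref{thm:L2 case}.
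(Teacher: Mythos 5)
Your reduction to the universal target is sound and coincides with what the paper does: for finite $\NN$ the space you call $\mathcal{F}(\NN)$ is exactly the Wasserstein space $\W_1(\NN)$ of Proposition~\ref{prop:duality}, and taking $f(a)=\bd_a-\bd_{a_0}$ (equivalently $\bd_a-\nu$) is precisely the implication ${\it (1)}\Rightarrow{\it (2)}$ there. The problem is everything after that. The entire quantitative content of the theorem is deferred to an inequality you do not prove, and the route you sketch for it rests on a claim that is not available: that $\W_1(\NN)$ embeds into $L_1$ with distortion $\asymp\sqrt{n}$ and hence has cotype~$2$ constant $\asymp\sqrt{n}$. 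No such embedding is known; the only trivial bound is $C_2(\W_1(\NN))\le\sqrt{\dim \W_1(\NN)}=\sqrt{|\NN|-1}$, which is exponential in $n$, and the paper's closing remark records that it is \emph{open} whether $\W_1(\NN)$ (or even $\W_1(\ell_2)$) has finite cotype at all, the known information being only a \emph{lower} bound of order $\sqrt{\log n}$ on its distance to subspaces of $\ell_1$. Even granting the cotype claim, you would still need a metric (nonlinear) cotype inequality applicable to the $L$-Lipschitz map $F$ on an $\e$-scale grid, with no loss in the constant and no scale restriction --- and the paper explicitly states that using metric cotype to prove non-extendability ``eludes'' the author. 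A back-of-the-envelope check also shows the cube diagonal-versus-edge comparison (the Enflo-type mechanism that works for Theorem~\ref{thm:l1}) degenerates here: with source $\ell_2^n$ the edges of $n^{-1/2}\{-1,1\}^n$ have length $2/\sqrt{n}$, so an $L$-Lipschitz $F$ already satisfies the type-$2$ edge bound with room to spare, and no contradiction of the form $\Delta L\gtrsim\sqrt{n}\,\e$ falls out.

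The paper's actual argument is quite different and you would need to find something like it. Working directly with the stochastic retraction $\{\mu_x\}\subset\W_1(\NN)$ of Proposition~\ref{prop:duality}${\it (2)}$, one observes that for each direction $z\in S^{n-1}$ the test function $a\mapsto\sign(\langle z,a\rangle)$ is $\frac{2}{\e}$-Lipschitz on the $\e$-separated set $\NN$, so the odd function $\psi_z(x)=\imb_{\mu_x-\mu_{-x}}(z)$ is $\frac{4L}{\e}$-Lipschitz on $S^{n-1}$. The spectral gap of the sphere then forces $\psi_z$ to be small in $L_2$, namely of order $L/(\sqrt{n}\e)$ per unit surface measure; projecting the imbalance onto linear functions (identity~\eqref{eq:projection identity} and inequality~\eqref{eq:imbalance moment inequality}) converts this into the existence of some $x$ with $\|\text{\tt Moment}_{\mu_x}-\text{\tt Moment}_{\mu_{-x}}\|_{\ell_2^n}\lesssim L/(\e\sqrt{n})$. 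On the other hand, via~\eqref{eq:moment wasserstein} the moments must reproduce $\|x-(-x)\|_{\ell_2^n}=2$ up to errors $O\big((L+A_\nu+1)\e\big)$, and optimizing $\e\asymp n^{-1/4}$ yields $A_\nu\gtrsim\sqrt{n}/L$. The crucial gain of $\sqrt{n}$ comes from the Poincar\'e inequality on $S^{n-1}$ applied to the direction-indexed family of imbalance functions, not from any cotype property of the target; that is the idea missing from your proposal.
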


The goal of~\cite{NS18-extension} is to improve Theorem~\ref{thm:bourgain} when $\X$ is the Schatten--von Neumann trace class $\mathsf{S}_p^n$ for some $n\in \N$ and $p\ge 1$; this is used in~\cite{NS18-extension} for a geometric application. The Euclidean setting of Theorem~\ref{thm:NS} is a byproduct of the investigations of~\cite{NS18-extension} that is valuable in its own right, but it is not needed for the purposes of~\cite{NS18-extension}. Another byproduct of~\cite{NS18-extension} is  an improvement of     Theorem~\ref{thm:bourgain} when $\X=\ell_p^n$ and  $p>1$; we will state this improvement later and also derive lower bounds when $\X=\ell_p^n$ and $p\in (1,\infty)\setminus \{2\}$  that are not as satisfactory as the optimal bound~\eqref{eq:max on N hilbert}. It would be interesting to obtain sharp results in this context as well. More substantially, it would be interesting (likely requiring a major new idea) to obtain sharp bounds when $\X=\mathsf{S}_p^n$ for $p\in [1,\infty]\setminus \{2\}$. For example, if $\X=\mathsf{S}_1^n$, then~\cite{NS18-extension}  shows that the conclusion of  Theorem~\ref{thm:bourgain} can be improved to
\begin{equation}\label{eq:3/4}
\max_{a\in \NN} \|F(a)-f(a)\|_\Y\lesssim \frac{n^{\frac{3}{2}}}{L}\e=\frac{\dim(\mathsf{S}_1^n)^{\frac34}}{L}\e.\end{equation}
We do not know how close~\eqref{eq:3/4} is to being optimal; our approach here does not seem to shed light on this.

\subsection*{Roadmap} Theorem~\ref{thm:l1} will be proved in Section~\ref{sec:l1 proof} and Theorem~\ref{thm:L2 case} will be proved in Section~\ref{thm:L2 case}. In Section~\ref{sec:ext intro} we will discuss in greater detail various aspects of the almost extension problem for Lipschitz functions, including comparing it to the classical Lipschitz extension problem, proving preliminary facts, and presenting intriguing open questions. Thus, Section~\ref{sec:ext intro} has an introductory component,  so those who prefer to  first read the proofs of Theorem~\ref{thm:l1} and Theorem~\ref{thm:L2 case} could skip directly to Section~\ref{sec:l1 proof} and Section~\ref{thm:L2 case}, respectively. The proof of Theorem~\ref{thm:L2 case} in Section~\ref{thm:L2 case} does use parts of Section~\ref{sec:ext intro}, namely Remark~\ref{rem:moment} and Proposition~\ref{prop:duality} therein, but these could be read independently. Also, Section~\ref{sec:lp variant} presents variants of Theorem~\ref{thm:l1} when the source space is $\ell_p^n$ for $1<p<2$, but the range $p>2$ is treated (less satisfactorily) in Section~\ref{sec:compactum}.

\section{Extension versus almost extension}\label{sec:ext intro} By proving Theorem~\ref{thm:bourgain}, Bourgain's achievement was not limited to merely establishing its statement. It contains the noteworthy conceptual realization that his formulation of the {\em almost extension problem} overcomes a  barrier that previously precluded certain applications. Theorem~\ref{thm:bourgain} shows  that by relaxing the traditional requirement that the restriction of $F$ to $\NN$ coincides with $f$ to the weaker requirement that the restriction of $F$ to $\NN$ is close to $f$, it is possible ensure that $F$ is $O(1)$-Lipschitz; this can fail badly if one insists that $F$ extends $f$, namely, in that case the Lipschitz constant must sometimes tend to $\infty$ with $n$. At the same time, some important applications necessitate  that $F$ is $O(1)$-Lipschitz, and they persist if $F$ is  $O(1)$-Lipschitz yet only sufficiently close to $f$ on $\NN$, as demonstrated decisively in~\cite{Bou87}.

The input of the classical {\em Lipschitz extension problem} is two metric spaces $(\MM,d_\MM),(\mathcal{Z},d_{\mathcal{Z}})$, as well as a subset  $\Omega\subset \MM$. The goal is to determine if there is $K\in [1,\infty)$ such that for every Lipschitz function $f:\Omega\to \mathcal{Z}$ there is a function $F:\MM\to \mathcal{Z}$ that extends $f$, i.e., $F(\omega)=f(\omega)$ for all $\omega\in \Omega$, and such that $\|F\|_{\Lip}\le K\|f\|_{\Lip}$. Here, and in what follows, $\|\cdot\|_{\Lip}$ denotes the Lipschitz constant. Let $\ee(\MM,\Omega;\mathcal{Z})$ denote the infimum over those $K$ for which the above extension phenomenon holds, with the convention that $\ee(\MM,\Omega;\mathcal{Z})=\infty$ if there is no such $K$. Write $\ee(\MM;\mathcal{Z})=\sup_{\Omega\subset \MM} \ee(\MM,\Omega;\mathcal{Z})$. Also, denote by $\ee(\MM,\Omega)$ the supremum of $\ee(\MM,\Omega;\Y)$ as $\Y$ ranges over all   Banach spaces, and define $\ee(\MM)=\sup_{\Omega\subset \MM} \ee(\MM,\Omega)$.

Finding geometric conditions on the triple $(\MM,\Omega,\mathcal{Z})$ which ensure that $\ee(\MM,\Omega;\mathcal{Z})<\infty$, and moreover obtaining  upper and lower bounds on $\ee(\MM,\Omega;\mathcal{Z})$, are important questions with applications in several areas. The techniques that were developed to extend Lipschitz functions exhibit a rich interplay between this area and other mathematical disciplines;  see the monograph~\cite{BB12} for an indication of (part of) the many results along these lines that have been obtained over the past century.

While  $\ee(\MM;\Omega)$ is defined in terms of extension of mappings that can take values in {\em all possible} Banach spaces, it has an intrinsic geometric characterization~\cite{Koz05,BB07-2,Oht09,AP20}. We will next explain a  generalization of this fact that applies to (but is more general than) the almost extension setting of Theorem~\ref{thm:bourgain}; this will  clarify the geometric task at hand, and will also be valuable for the proof of Theorem~\ref{thm:L2 case}. For the sake of brevity and in order to avoid the need to treat issues (measurability, working with the Lipschitz  free space over $\Omega$; see~\cite{Wea18}) that are by now well understood  but  could obscure the main geometric content, our presentation  (in Proposition~\ref{prop:duality}) will be confined to the case when $\Omega$ is  finite;  this  is all that we will use here (in our setting, $\Omega=\NN$ is a net in the unit sphere of a finite dimensional normed space, hence it is finite), but if the need to treat infinite subsets will arise in future investigations, then it will be mechanical (thanks to~\cite{Koz05,BB07-2,Oht09,AP20})  to adjust the characterization accordingly (using the Lipschitz free space instead of the Wasserstein-$1$ space).

Suppose that $(\MM,d_\MM)$ is a metric space and that $\Omega$ is a finite subset of $\MM$. Let $\W_1(\Omega)$ be the space of all the signed measures $\mu$ on $\Omega$ with vanishing total mass, namely $\mu(\Omega)=0$. Equivalently, $\W_1(\Omega)$ is the hyperplane in $\R^\Omega$ consisting of all those $\mu:\Omega\to \R$ that satisfy $\sum_{\omega\in \Omega}\mu(\omega)=0$. For each $\mu\in \W_1(\Omega)$ denote $\mu^+=\max\{\mu,0\}$ and $\mu^-=\max\{-\mu,0\}$, so that $\mu^+,\mu^-$ are disjointly supported nonnegative measures on $\Omega$ with $\mu^+(\Omega)=\mu^-(\Omega)$ and $\mu=\mu^+-\mu^-$. The set of all possible couplings of $\mu^+$ and $\mu^-$ is denoted $\Pi(\mu^+,\mu^-)$; it consists of all those nonnegative measures $\pi$ on $\Omega\times \Omega$ that satisfy $\sum_{\omega'\in \Omega}\pi(\omega,\omega')=\mu^+(\omega)$ and    $\sum_{\omega'\in \Omega}\pi(\omega',\omega)=\mu^-(\omega)$ for every $\omega\in \Omega$. The Wasserstein-1 norm of $\mu\in \W_1(\Omega)$ is defined by
\begin{equation}\label{eq:wasserstein norm}
\|\mu\|_{\W_1(\Omega)}\eqdef \inf_{\pi\in \Pi(\mu^+,\mu^-)} \iint_{\Omega\times \Omega} d_\MM(\omega,\omega')\ud\pi(\omega,\omega').
\end{equation}
This turns $\W_1(\Omega)$ into a Banach space. See~\cite{Vil03} for the  background and context on optimal transport, though, due to our finitary setting, essentially none of that material will be needed here; we only recall the Kantorovich--Rubinstein duality theorem (see~\cite[Theorem~1.14]{Vil03}) which states that
\begin{equation}\label{eq:KR}
\|\mu\|_{\W_1(\Omega)}=\sup_{\substack{\f:\Omega\to \R\\ \|\f\|_{\Lip}\le 1}} \int_\Omega \f\ud \mu.
\end{equation}
The geometric interpretation of the duality~\eqref{eq:KR} is that the unit ball of $\W_1(\Omega)$ is the following polyope
$$
B_{\W_1(\Omega)}=\mathrm{conv} \left\{\frac{\bd_\omega-\bd_{\omega'}}{d_\MM(\omega,\omega')}:\ \omega,\omega'\in \Omega\ \wedge\ \omega\neq\omega'\right\},
$$
where $\bd_\omega$ denotes the delta-mass at the point $\omega\in \Omega$. Equivalently, $\W_1(\Omega)$ is the dual of the Banach space consisting of all the functions $f:\Omega\to \R$ that satisfy $\sum_{\omega\in \Omega} f(\omega)=0$, equipped with the norm $\|f\|_{\Lip}$.

\begin{remark}\label{rem:moment} We record for ease of later use (in the proof of Theorem~\ref{thm:L2 case}) the following simple consequence of~\eqref{eq:KR}. Let $(\X,\|\cdot\|_\X)$ be a normed space and  $\Omega\subset \X$ a finite subset. The {\em moment} of a measure $\mu\in \W_1(\Omega)$ is
\begin{equation}\label{eq:moment def}
\text{\tt Moment}_\mu \eqdef \int_\Omega \omega\ud\mu(\omega)\in \X.
\end{equation}
The length of the vector $\text{\tt Moment}_\mu$ is at most the Wasserstein-1 norm of $\mu$. Indeed, if $x^*\in S_{\X^*}$ is a normalizing functional of $\text{\tt Moment}_\mu$, i.e., $x^*(\text{\tt Moment}_\mu)=\|\text{\tt Moment}_\mu\|_\X$, then since $\|x^*\|_{\Lip}=\|x^*\|_{\X^*}=1$,
\begin{equation}\label{eq:moment wasserstein}
\left\|\text{\tt Moment}_\mu\right\|_\X= \int_\Omega x^*\ud\mu\le \|\mu\|_{\W_1(\Omega)}.
\end{equation}
\end{remark}

\begin{proposition}\label{prop:duality} Suppose that  $(\MM,d_\MM)$ is a metric space and fix a finite subset  $\Omega\subset \MM$. Fix also $L\ge 1$ and a function $\alpha:\Omega\to [0,\infty)$. Then, the following statements are equivalent.
\begin{enumerate}
\item For every Banach space $(\Y,\|\cdot\|_\Y)$ and every $1$-Lipschitz mapping $f:\Omega\to \Y$ there is an $L$-Lipschitz mapping $F:\MM\to \Y$ that satisfies $\|F(\omega)-f(\omega)\|_\Y\le \alpha(\omega)$ for every $\omega\in \Omega$.
    \item For every probability measure $\nu$ supported on $\Omega$ and every $x\in \MM$ there is $\mu_x=\mu_x(\nu)\in \W_1(\Omega)$ such that $\|\mu_x-\mu_y\|_{\W_1(\Omega)}\le Ld_\MM(x,y)$ for every $x,y\in \MM$, and $\|\mu_\omega-(\bd_\omega-\nu)\|_{\W_1(\Omega)}\le \alpha(\omega)$ for every $\omega\in \Omega$.
\end{enumerate}
\end{proposition}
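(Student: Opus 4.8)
\textbf{Proof strategy for Proposition~\ref{prop:duality}.}

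The plan is to prove the two implications separately, with the backward direction $(2)\Rightarrow(1)$ being the one that encodes the ``universal target space'' trick: one takes $\Y$ itself to be (a subspace of) $\W_1(\Omega)$, reducing the almost-extension problem for {\em all} Banach-valued Lipschitz maps to a single extension problem into the Wasserstein space. For the forward direction $(1)\Rightarrow(2)$, I would specialize the hypothesis to the concrete map $f=f_\nu:\Omega\to \W_1(\Omega)$ defined by $f_\nu(\omega)\eqdef \bd_\omega-\nu$, which is $1$-Lipschitz because $\|f_\nu(\omega)-f_\nu(\omega')\|_{\W_1(\Omega)}=\|\bd_\omega-\bd_{\omega'}\|_{\W_1(\Omega)}=d_\MM(\omega,\omega')$ by~\eqref{eq:wasserstein norm} (the optimal coupling of two delta masses is the obvious one). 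Applying $(1)$ to this $f_\nu$ produces an $L$-Lipschitz $F:\MM\to \W_1(\Omega)$ with $\|F(\omega)-(\bd_\omega-\nu)\|_{\W_1(\Omega)}\le \alpha(\omega)$; setting $\mu_x\eqdef F(x)$ gives exactly the family required in $(2)$ (the $L$-Lipschitz condition and the approximation-at-net-points condition transfer verbatim). A small point to check here is that $\bd_\omega-\nu\in\W_1(\Omega)$, which holds since $\nu(\Omega)=1=\bd_\omega(\Omega)$, so this difference has vanishing total mass.

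For $(2)\Rightarrow(1)$, fix an arbitrary Banach space $\Y$ and a $1$-Lipschitz $f:\Omega\to \Y$. The idea is to use the duality~\eqref{eq:KR}: regard each $\mu\in\W_1(\Omega)$ as acting on $\Y$-valued Lipschitz functions by ``integration'', i.e.\ define $\Lambda_f:\W_1(\Omega)\to \Y$ by $\Lambda_f(\mu)\eqdef \int_\Omega f\ud\mu=\sum_{\omega\in\Omega}\mu(\omega)f(\omega)$. Because $f$ is $1$-Lipschitz, $\Lambda_f$ is a linear operator of norm at most $1$ from $\W_1(\Omega)$ to $\Y$: indeed for $\mu\in\W_1(\Omega)$ and any $y^*\in S_{\Y^*}$ one has $y^*(\Lambda_f(\mu))=\int_\Omega (y^*\circ f)\ud\mu\le \|y^*\circ f\|_{\Lip}\,\|\mu\|_{\W_1(\Omega)}\le \|\mu\|_{\W_1(\Omega)}$ by~\eqref{eq:KR}, since $y^*\circ f:\Omega\to\R$ is $1$-Lipschitz. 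Now apply hypothesis $(2)$ with $\nu$ an arbitrary probability measure on $\Omega$ (e.g.\ uniform, or any fixed choice) to obtain the family $\{\mu_x\}_{x\in\MM}$, and define $F:\MM\to\Y$ by $F(x)\eqdef \Lambda_f(\mu_x)+f(\omega_0)-\Lambda_f(\bd_{\omega_0}-\nu)$ for a fixed basepoint $\omega_0\in\Omega$ --- actually the cleanest normalization is simply $F(x)\eqdef \Lambda_f(\mu_x)+c$ where the constant $c\in\Y$ is chosen so that the estimates below work out; one verifies $\|F(x)-F(y)\|_\Y=\|\Lambda_f(\mu_x-\mu_y)\|_\Y\le \|\mu_x-\mu_y\|_{\W_1(\Omega)}\le L\,d_\MM(x,y)$, so $F$ is $L$-Lipschitz regardless of $c$. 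For the approximation at net points, compute $\Lambda_f(\bd_\omega-\nu)=f(\omega)-\int_\Omega f\ud\nu$, so choosing $c\eqdef \int_\Omega f\ud\nu$ gives
\[
\|F(\omega)-f(\omega)\|_\Y=\big\|\Lambda_f(\mu_\omega)-\Lambda_f(\bd_\omega-\nu)\big\|_\Y\le \|\mu_\omega-(\bd_\omega-\nu)\|_{\W_1(\Omega)}\le \alpha(\omega),
\]
using $\|\Lambda_f\|\le 1$ and the second estimate in $(2)$. This completes the implication.

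The main obstacle --- really the only subtlety --- is getting the bookkeeping of the additive constant and the choice of $\nu$ right so that $F$ simultaneously has the correct Lipschitz constant and the correct approximation property; once one realizes that $\Lambda_f$ absorbs all the Banach-space dependence into a norm-$1$ operator out of the {\em fixed} space $\W_1(\Omega)$, everything else is a direct consequence of Kantorovich--Rubinstein duality~\eqref{eq:KR} and the definition~\eqref{eq:wasserstein norm}. It is worth remarking that the freedom to choose $\nu$ in statement $(2)$ --- rather than pinning down a single $\nu$ --- is exactly what makes the equivalence clean: in the $(1)\Rightarrow(2)$ direction we get to feed in whichever $\nu$ we please, and in the $(2)\Rightarrow(1)$ direction we likewise get to pick one. (If one wanted the sharper statement with $\nu$ quantified the other way, the argument would need the forward direction to hold for all $\nu$ at once, which it does, since $(1)$ gives an $F$ for every $1$-Lipschitz $f$ and hence for every $f_\nu$.)
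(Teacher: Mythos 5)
Your proposal is correct and takes essentially the same route as the paper's proof: the forward direction specializes hypothesis \emph{(1)} to the universal target $\W_1(\Omega)$ with $f(\omega)=\bd_\omega-\nu$, and the backward direction constructs the very same map $F(x)=\int_\Omega f\ud\mu_x+\int_\Omega f\ud\nu$. The only (cosmetic) difference is that you verify the key bound $\|\int_\Omega f\ud\mu\|_\Y\le\|\mu\|_{\W_1(\Omega)}$ by pairing with a norming functional $y^*$ and invoking Kantorovich--Rubinstein duality~\eqref{eq:KR}, whereas the paper obtains it by rewriting $\int_\Omega f\ud\mu$ as an integral of differences $f(\omega)-f(\omega')$ against an optimal coupling of $\mu^+$ and $\mu^-$ and applying the triangle inequality; both verifications are valid.
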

Observe that if part~${(\it 2)}$ of Proposition~\ref{prop:duality}  holds for some probability measure $\nu$ on $\Omega$, then it also holds for any other probability measure $\nu'$ on $\Omega$, as seen by  considering $\{\mu_x(\nu')=\mu_x(\nu)+\nu-\nu'\}_{x\in \MM}$. If $\alpha\equiv 0$, then Proposition~\ref{prop:duality} is  the characterization of classical Lipschitz extension in~\cite{Koz05,BB07-2,Oht09,AP20}, while the almost extension setting of Theorem~\ref{thm:bourgain} and Theorem~\ref{thm:L2 case} corresponds to the special case of Proposition~\ref{prop:duality} when $\alpha$ is the constant function which equals a positive  multiple of $\e$, and $\Omega$ is the $\e$-net $\NN$.

\begin{remark} To elucidate the geometric meaning of part~${(\it 2)}$ of Proposition~\ref{prop:duality}, consider its variant in which $\{\mu_x\}_{x\in \MM}$ are probability measures on $\Omega$ instead of elements of $\W_1(\Omega)$, and the analogous requirement is
\begin{equation}\label{eq:convex hall variant}
\forall x,y\in \MM, \qquad \|\mu_x-\mu_y\|_{\W_1(\Omega)}\le Ld_\MM(x,y)\qquad\mathrm{and} \qquad \forall\omega\in \Omega,\qquad \|\mu_\omega-\bd_\omega\|_{\W_1(\Omega)}\le \alpha(\omega).
\end{equation}
 The ensuing proof  of Proposition~\ref{prop:duality} shows mutatis mutandis that this is equivalent to the same assertion as part~${\it (1)}$ of Proposition~\ref{prop:duality}, with the additional requirement that $F$ takes values in the {\em convex hull} of $f(\Omega)$.  One can interpret the family of probability measures   $\{\mu_x\}_{x\in \MM}$ as  a ``randomized rounding scheme'' that associates to each point in the ambient space $\MM$ a random element of the subset $\Omega$. The first condition in~\eqref{eq:convex hall variant} is a consistency requirement for this rounding scheme, ensuring that nearby points distribute themselves over $\Omega$ in manners that are close to each other in the sense of transportation cost. The second condition in~\eqref{eq:convex hall variant} is that each point of $\Omega$ distributes itself over $\Omega$ in a manner that is close (in the sense of transportation cost) to the delta mass at that point; the case $\alpha\equiv 0$, which corresponds to Lipschitz extension (with the target constrained to be the convex hall of $\Omega$), is the more stringent requirement that each element of $\omega$ is ``rounded'' to itself with probability $1$. In actuality, part~${(\it 2)}$ of Proposition~\ref{prop:duality}  associates to each point in the ambient space $\MM$ an element of $\W_1(\Omega)$, i.e., the difference of two disjointly supported  positive measures on $\Omega$ of equal total mass; this makes more use of the vector space structure of $\Y$, rather than only considering its convex structure, but we still think of the measures $\{\mu_x\}_{x\in \MM}$ in part~${(\it 2)}$ of Proposition~\ref{prop:duality}  as a form of ``stochastic retraction'' of $\MM$ onto $\Omega$. We warn, however, that while in full generality the behaviors of the  extension problem into $\Y$ and the extension problem into $\mathrm{conv}(f(\Omega))\subset \Y$ are demonstrably different~\cite{Nao20}, at present the relation between the two questions is for the most part shrouded in mystery; these issues are discussed in greater detail in the forthcoming work~\cite{Nao20}.

\end{remark}

\begin{proof}[Proof of Proposition~\ref{prop:duality}] The implication ${\it (1)}\implies {\it (2)}$ is a direct application of ${\it (1)}$ to the case $\Y=\W_1(\Omega)$ and $f(\omega)=\bd_\omega-\nu$ for $\omega\in \Omega$, where we set $F(x)=\mu_x$ for $x\in \MM$.

To justify   the reverse implication ${\it (2)}\implies {\it (1)}$, define for every $x\in \MM$, $$F(x)\eqdef\int_\Omega f\ud\mu_x+\int_\Omega f\ud\nu\in \Y.$$
 Fix $x,y\in \MM$ and take $\pi\in \Pi((\mu_x-\mu_y)^+,(\mu_x-\mu_y)^-)$ for which
 \begin{equation}\label{eq:choose pi}
 \iint_{\Omega\times \Omega} d_\MM(\omega,\omega')\ud\pi(\omega,\omega')=\|\mu_x-\mu_y\|_{\W_1(\Omega)}\le Ld_\MM(x,y).\end{equation}  Then, $F$ is $L$-Lipschitz because
\begin{align*}
\|F(x)-F(y)\|_{\Y}&=\Big\|\int_{\Omega} f\ud(\mu_x-\mu_y)^+-\int_\Omega f\ud(\mu_x-\mu_y)^-\Big\|_\Y\\&=\Big\|\iint_{\Omega\times \Omega} \big(f(\omega)-f(\omega')\big)\ud\pi(\omega,\omega')\Big\|_\Y\\ &\le \iint_{\Omega\times \Omega} \|f(\omega)-f(\omega')\|_\Y\ud\pi(\omega,\omega')\\&\le \iint_{\Omega\times \Omega} d_\MM(\omega,\omega')\ud\pi(\omega,\omega')\stackrel{\eqref{eq:choose pi}}{\le} Ld_\MM(x,y),
\end{align*}
where the second equality holds because $\pi$ is a coupling of $(\mu_x-\mu_y)^+$ and $(\mu_x-\mu_y)^-$.
Next, fix $\omega\in \Omega$ and take $\sigma\in \Pi((\mu_\omega-(\bd_\omega-\nu))^+,(\mu_\omega-(\bd_\omega-\nu))^-)$  for which \begin{equation}\label{eq:choose sigma}
\iint_{\Omega\times \Omega} d_\MM(\omega',\omega'')\ud\sigma(\omega',\omega'')=\|\mu_\omega-(\bd_\omega-\nu)\|_{\W_1(\Omega)}\le \alpha(\omega).
\end{equation}
Then,
\begin{align*}
\|F(\omega)-f(\omega)\|_\Y&=\Big\|\Big(\int_\Omega f\ud\mu_x+\int_\Omega f\ud\nu\Big)-\int_\Omega f\ud\bd_\omega\Big\|_\Y\\&=\Big\|\int_\Omega f\ud\big(\mu_\omega-(\bd_\omega-\nu)\big)^+-\int_\Omega f\ud \big(\mu_\omega-(\bd_\omega-\nu)\big)^- \Big\|_\Y\\
&= \Big\|\iint_{\Omega\times \Omega} \big(f(\omega')-f(\omega''t)\big)\ud\sigma(\omega',\omega'')\Big\|_\Y\\&\le \iint_{\Omega\times \Omega} \|f(\omega')-f(\omega'')\|_\Y\ud\sigma(\omega',\omega'')\\&\le  \iint_{\Omega\times \Omega} d_\MM(\omega',\omega'')\ud\sigma(\omega',\omega'')\stackrel{\eqref{eq:choose sigma}}{\le} \alpha(\omega).\tag*{\qedhere}
\end{align*}
\end{proof}

Both parts of Proposition~\ref{prop:duality} are beneficial viewpoints for either proving the existence of an (almost) extension, or for proving impossibility results; the latter  is the topic of the present work. Our proof of Theorem~\ref{thm:bourgain} directly rules out the existence of an almost extension as in part~${(\it 1)}$ of Proposition~\ref{prop:duality}, while our proof of Theorem~\ref{thm:L2 case} rules out the existence of a stochastic retraction as in part~${(\it 2)}$ of Proposition~\ref{prop:duality}.

There is a universal constant $c>0$ such that every finite dimensional normed space $\X$ satisfies  \begin{equation}\label{eq:quote JLS}\dim(\X)^c\lesssim \ee(\X)\lesssim \dim(\X).\end{equation}
The upper bound on $\ee(\X)$ in~\eqref{eq:quote JLS} is a classical theorem of~\cite{JLS86} and the lower bound on $\ee(\X)$ in~\eqref{eq:quote JLS} is due to the forthcoming work~\cite{Nao20}. It is a  major problem  to evaluate the asymptotic behavior (as $n\to \infty$) of $\ee(\X)$ for specific $n$-dimensional normed spaces $\X$, but this goal resisted efforts for a long time. The only normed space for which this was accomplished (up to lower order factors) is $\ell_\infty^n$, namely
\begin{equation}\label{eq:l infty}
\sqrt{n}\lesssim \ee(\ell_\infty^n)\lesssim \sqrt{n\log n}.
\end{equation}
The lower bound on $\ee(\ell_\infty^n)$ in~\eqref{eq:l infty} is a classical combination of~\cite{Sob41} and~\cite{Lin64}, and the upper bound on $\ee(\ell_\infty^n)$  in~\eqref{eq:l infty} is due to~\cite{Nao17-SODA}. Remarkably, even the Euclidean case $\X=\ell_2^n$ remains a tantalizing mystery, with the currently best known bounds being
\begin{equation}\label{eq:ext euclidean known}
\sqrt[4]{n}\lesssim \ee(\ell_2^n)\lesssim \sqrt{n}.
\end{equation}
The upper bound on $\ee(\ell_2^n)$ in~\eqref{eq:ext euclidean known} is due to~\cite{LN05} and the lower bound on $\ee(\ell_2^n)$ in~\eqref{eq:ext euclidean known} is due to~\cite{MN13} (building on ideas of~\cite{Kal04,Kal12}). Determining the asymptotic behavior of the maximum of $\ee(\X)$ over all possible $n$-dimensional normed spaces $\X$ is also a major longstanding open question; at present, the best known bounds are that it is at most a constant multiple of $n$ and at least a constant multiple of $\sqrt{n}$.

The above discussion demonstrates that even in the seemingly ``nice'' setting of finite dimensional normed (even Euclidean) spaces\footnote{Lipschitz non-extension phenomena were derived  in many other settings as well. For example, see~\cite{NR17} for the currently best-known general bounds in terms of the cardinality of $\Omega$ as well as~\cite{MM10} for algorithmic ramifications of such bounds. Lipschitz non-extension theorems in the context of geometric group theory can be found in~\cite{BF09,RW10,NP11,BLP16}.}, one cannot hope to be able to extend any $1$-Lipschitz mapping to a mapping whose Lipschitz constant does not tend to $\infty$ with the dimension (moreover, the rate must sometime be of power-type). In contrast, in Theorem~\ref{thm:bourgain} Bourgain specifies a firm upper bound $L$ that the Lipschitz constant of $F$ must not exceed, and shows that this is achievable by no longer insisting that $F$ extends $f$. Having such control on the Lipschitz constant of $F$ is important for certain applications, most notably the alternative tradeoff of Theorem~\ref{thm:bourgain} is used crucially in the proof of {\em Bourgain's discretization theorem}~\cite{Bou87}, which is a quantitative version of an important rigidity theorem of Ribe~\cite{Rib76}.

When one is given a  discretely-defined $1$-Lipschitz mapping $f$, it is often desirable to obtain a mapping $F$ that ``mimics'' $f$ yet is defined on the entire ambient normed space, so as to be able to apply continuous methods, e.g.~using differentiation or harmonic analysis. Any such reasoning that needs  dimension-independent bounds on the derivatives of $F$ requires good control on $\|F\|_{\Lip}$. The proof of Ribe's theorem in~\cite{Bou87} would utterly fail if the norm of the derivative of $F$ were unbounded as $n\to \infty$, because Ribe's theorem is all about dimension-independent bounds (its conclusion concerns the finite representability of an infinite dimensional Banach space). At the same time, by~\cite{Bou87} the desired statement (Ribe's theorem, and even a quantitative version thereof) follows if $F$ is sufficiently close to $f$ on the net $\NN$ (per the conclusion of Theorem~\ref{thm:bourgain}), provided that  $f$ is $O(1)$-bi-Lipschitz, and $\NN$ is fine enough, i.e., $\e$ is sufficiently small (the requirement of~\cite{Bou87} is $\e\le \exp(-\exp(O(n)))$, which remains the best known bound  in full generality; finding the optimal asymptotic behavior here is a fascinating open question; see~\cite{GNS12,Ost13,LN13,HLN16,HN19}). Other applications of Bourgain's almost extension theorem appear in~\cite{Bou87,GNS12,NR17,NS18-extension}, some of which build on its simplified proof in~\cite{Beg99} (but we checked that they could be deduced also using the original proof in~\cite{Bou87}); not all of these applications belong to the above framework, e.g.~\cite[Corollary~22]{NR17} is a statement in approximation theory.

Beyond his demonstration of the  highly significant application to Ribe's rigidity theorem, Bourgain took note of the potential of the aforementioned ``twist''  in Theorem~\ref{thm:bourgain} on the classical Lipschitz extension problem, wherein one can specify the desired bound on $\|F\|_{\Lip}$, by stating on page~158 of~\cite{Bou87} that the existence of such a mapping ``may already have considerable implications.'' He proceeded to further illustrate this by deducing Theorem~\ref{thm:BourDvo} below on almost ellipsoidal sections. It is beneficial to recall this specific application here because of its direct relevance to the present work, namely this is how a power-type impossibility result in the context of Theorem~\ref{thm:bourgain} is obtained in~\cite{Bou87}.

Prior to stating Theorem~\ref{thm:BourDvo}, we introduce the following (somewhat ad-hoc, but convenient) terminology that facilitates  discussion of the parameters that occur in Theorem~\ref{thm:bourgain}.

\begin{definition}[almost extension pair]\label{def:almost extension pair} Let $(\X,\|\cdot\|_\X)$ be a normed space. A pair $(L,A)\in [1,\infty)\times [1,\infty)$ is an $\X$-{\em almost extension pair} if  for every $\e>0$, every normed space $(\Y,\|\cdot\|_\Y)$, every $\e$-net $\NN$ of the unit sphere of $\X$, and every $1$-Lipschitz mapping $f:\NN\to \Y$, there exists an $L$-Lipschitz mapping $F:\X\to \Y$ that satisfies $$\sup_{a\in \NN}\|F(a)-f(a)\|_\Y\le A\e.$$
\end{definition}

Let $(\MM,d_\MM),(\MM',d_{\MM'})$ be  metric spaces. Suppose  that they are bi-Lipschitz equivalent to each other, namely that there exist a bijection $\psi:\MM\to \MM'$,  a distortion $D\ge 1$, and a scaling factor $s>0$ such that $sd_\MM(x,y)\le d_\MM'(x,y)\le Ds d_\MM(x,y)$ for all $x,y\in \MM$. The infimum over those $D\ge 1$ for which this holds is called the bi-Lipschitz distance between $(\MM,d_\MM)$ and $(\MM',d_{\MM'})$, and it is denoted (when the metrics are clear from the context) by $d_\Lip(\MM,\MM')$. Set $d_\Lip(\MM,\MM')=\infty$ if $(\MM,d_\MM),(\MM',d_{\MM'})$ are not bi-Lipschitz equivalent. The bi-Lipschitz distortion of $\MM$ in $\MM'$ is defined by $\cc_{\MM'}(\MM)=\inf_{\Omega\subset \MM'} d_\Lip(\MM,\Omega)$.

If  $(\X,\|\cdot\|_\X)$ and $(\Y,\|\cdot\|_\Y)$ are  two isomorphic normed spaces, then their Banach--Mazur distance, denoted $d_\BM(\X,\Y)$, is the infimum over those $D\ge 1$ for which there exists a linear operator $T:\X\to \Y$ such that $\|x\|_\X\le \|Tx\|_\Y\le D\|x\|_\X$ for all $x\in X$. If $\dim(\X)=\dim(\Y)=n<\infty$, then by compactness  $d_\BM(\X,\Y)$ is attained, and by a standard application of the Rademacher differentiation theorem (see e.g.~\cite{BL00}) we have $d_\BM(\X,\Y)=d_\Lip(\X,\Y)$. If $1\le \alpha<\infty$ and $d_\BM(\ell_2^{n},\X)\le \alpha$, then $\X$ is said to be  $\alpha$-Hilbertian; geometrically, this is equivalent to the assertion that there exists an ellipsoid $\boldsymbol{\mathcal{E}}\subset \X$ such that $\boldsymbol{\mathcal{E}}\subset B_\X\subset \alpha \boldsymbol{\mathcal{E}}$.

\begin{theorem}[\cite{Bou87}]\label{thm:BourDvo} For any $D,K\ge 1$ there exists  $\e\asymp1/\max\{D^2,K\}$ with the following property. Suppose that $n\in \N$ and $(D,K/D)$ is an $\ell_2^n$-almost extension pair. Let $\NN$  be an $\e$-net of $S^{n-1}$ and let $\Y$ be a normed space for which $\cc_\Y(\NN)\le D$. Then, $\Y$ has  a $2$-Hilbertian subspace $\mathbf{H}\subset \Y$ such that
$
\dim(\mathbf{H})\gtrsim n/D^4.
$
\end{theorem}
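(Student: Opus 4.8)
The plan is to extract from the hypotheses a single $D^2$-Lipschitz map $G:\ell_2^n\to\Y$ that does not contract distances on $S^{n-1}$ at a fixed (dimension-independent) scale, and then to linearize $G$ by differentiation and averaging, feeding the outcome into Milman's version of Dvoretzky's theorem. To build $G$: since $\cc_\Y(\NN)\le D$, there is a bijection $g$ of $\NN$ onto a subset of the \emph{finite-dimensional} subspace $V\eqdef\mathrm{span}(g(\NN))\subset\Y$ which, after rescaling the norm of $\Y$, satisfies $\|a-b\|_2\le\|g(a)-g(b)\|_\Y\le D\|a-b\|_2$ for all $a,b\in\NN$. Apply the definition of an $\ell_2^n$-almost extension pair, with the Banach space $V$ and the $1$-Lipschitz map $f\eqdef g/D:\NN\to V$, to obtain a $D$-Lipschitz $F:\ell_2^n\to V$ with $\max_{a\in\NN}\|F(a)-f(a)\|_\Y\le(K/D)\e$; then $G\eqdef DF:\ell_2^n\to V$ is $D^2$-Lipschitz and $\max_{a\in\NN}\|G(a)-g(a)\|_\Y\le K\e$. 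Comparing points of $S^{n-1}$ with $\e$-close net points, and combining these bounds with $\|g(a)-g(b)\|_\Y\ge\|a-b\|_2$, shows --- once the implicit constant in $\e\asymp1/\max\{D^2,K\}$ is chosen sufficiently small --- that there is a universal $\delta\in(0,1)$ with
\begin{equation*}
\tfrac12\|x-y\|_2\le\|G(x)-G(y)\|_\Y\le D^2\|x-y\|_2\qquad\text{whenever }x,y\in S^{n-1}\text{ and }\|x-y\|_2\ge\delta .
\end{equation*}
Arranging for $G$ to take values in the finite-dimensional space $V$ (rather than all of $\Y$) is what will make the differentiation below legitimate without imposing a Radon--Nikodym hypothesis on $\Y$.

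The core of the argument is to upgrade this macroscopic non-contraction to a lower bound on a single linear map; this cannot be done pointwise, so it proceeds by averaging. First fatten the sphere to the annulus $A_\tau\eqdef\{z\in\ell_2^n:1-\tau\le\|z\|_2\le1\}$ with $\tau\asymp\delta/D^2$: using that $G$ is $D^2$-Lipschitz, the displayed estimate self-improves to $\|G(x)-G(y)\|_\Y\ge\tfrac13\|x-y\|_2$ for all $x,y\in A_\tau$ with $\|x-y\|_2\gtrsim\delta$. Since $V$ is finite-dimensional, Rademacher's theorem gives differentiability of $G$ at $\L^n$-almost every point of $\ell_2^n$; a coarea/Fubini argument applied to the real-analytic submersion $(r,x,v,\theta)\mapsto r(\cos\theta\,x+\sin\theta\,v)$ from $(1-\tau,1)\times T^1S^{n-1}\times[0,\theta_0]$ onto $A_\tau$ (here $T^1S^{n-1}$ is the unit tangent bundle and $\theta_0\asymp\delta$) then shows that for almost every concentric great-circle arc $\alpha(\theta)\eqdef r(\cos\theta\,x+\sin\theta\,v)$ the curve $G\circ\alpha$ is absolutely continuous with $(G\circ\alpha)'(\theta)=DG(\alpha(\theta))\alpha'(\theta)$ for a.e.\ $\theta$. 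The endpoints $\alpha(0),\alpha(\theta_0)$ lie in $A_\tau$ at mutual distance $\asymp\delta$, so by the annular lower bound $\|G(\alpha(\theta_0))-G(\alpha(0))\|_\Y\gtrsim\delta$; since $\alpha'(\theta)=rw(\theta)$ with $w(\theta)$ a unit vector tangent to $rS^{n-1}$ at $\alpha(\theta)$ (hence $w(\theta)\perp\alpha(\theta)$), integrating $\|DG(\alpha(\theta))\alpha'(\theta)\|_\Y$ over $\theta$, averaging over $(r,x,v)$, and using rotation invariance produces a universal $c>0$ with $\E\|DG(r\hat z)w\|_\Y\ge c$, where $r$ is uniform on $(1-\tau,1)$, $\hat z$ is uniform on $S^{n-1}$, and $w$ is uniform on the unit sphere of $\hat z^\perp$. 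As the inner expectation over $w$ never exceeds $\|DG(r\hat z)\|\le D^2$, some $z_0\in A_\tau$ is a differentiability point of $G$ at which $T\eqdef DG(z_0)|_{z_0^\perp}:z_0^\perp\to\Y$ satisfies $\|T\|_{z_0^\perp\to\Y}\le D^2$ and $\E_w\|Tw\|_\Y\ge c$ over the unit sphere of $z_0^\perp$.

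To finish: on $z_0^\perp\cong\ell_2^{n-1}$ the Gaussian average satisfies $\E\|Tg\|_\Y\asymp\sqrt n\,\E_w\|Tw\|_\Y\gtrsim\sqrt n$ while $\|T\|_{\mathrm{op}}\le D^2$, so Milman's version of Dvoretzky's theorem yields a subspace $E\subset z_0^\perp$ of dimension $\gtrsim(\E\|Tg\|_\Y/\|T\|_{\mathrm{op}})^2\gtrsim n/D^4$ on which $w\mapsto\|Tw\|_\Y$ is $\tfrac32$-equivalent to a positive multiple of $\|\cdot\|_2$. In particular $T$ is injective on $E$, so $\mathbf H\eqdef T(E)\subset V\subset\Y$ is a subspace with $\dim\mathbf H\gtrsim n/D^4$ and $d_\BM(\ell_2^{\dim\mathbf H},\mathbf H)\le\tfrac32<2$, which is the desired conclusion. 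I expect the middle paragraph to be the only real difficulty. Its two subtle points are that the Rademacher exceptional set, while $\L^n$-null, can a priori carry full surface measure on $S^{n-1}$ itself, which forces one to work in the positive-measure annulus $A_\tau$, and that one must differentiate along great-circle \emph{arcs} rather than straight chords, so that the differentiation direction $w(\theta)$ stays tangent to the concentric spheres; this tangency is exactly what confines the averaged directions to the hyperplane $z_0^\perp$, which in turn is what the Dvoretzky step requires. The rest is routine triangle-inequality estimation and change-of-variables bookkeeping.
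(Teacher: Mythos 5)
Your argument is correct. The paper does not actually prove Theorem~\ref{thm:BourDvo} --- it quotes it from~\cite{Bou87} --- but your proof follows the same essential route as Bourgain's: use the $(D,K/D)$ almost-extension pair to upgrade the distortion-$D$ embedding of the net to a globally $D^2$-Lipschitz map $G$ into a finite-dimensional subspace that is non-contracting at a fixed universal scale (which is exactly what the choice $\e\asymp 1/\max\{D^2,K\}$ buys), differentiate to extract a linear operator $T$ with $\|T\|\le D^2$ and $\E_w\|Tw\|_\Y\gtrsim 1$, and feed it to the Dvoretzky--Milman theorem to get the $(M/b)^2n\gtrsim n/D^4$ dimension count. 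Your Rademacher-on-an-annulus/great-circle device is a sound substitute for Bourgain's smoothing by convolution, and the two points you flag as subtle (the sphere being Lebesgue-null, and differentiating along arcs so the averaged directions land in $z_0^\perp$) are indeed the only places requiring care; both are handled correctly.
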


Theorem~\ref{thm:BourDvo} is stated as~\cite[Theorem~2]{Bou87} only in the special case $K\asymp n$, because this is the statement of Theorem~\ref{thm:bourgain}, which was the only almost extension theorem that was available at the time; by Theorem~\ref{thm:NS}, we now know  that we can actually take $K\asymp\sqrt{n}$ here.\footnote{The especially significant case $D\lesssim 1$ of Theorem~\ref{thm:BourDvo}, together  with Theorem~\ref{thm:NS}, has the following  geometric interpretation. For some $\e\asymp 1/\sqrt{n}$, if an $\e$-net of $S^{n-1}$ embeds with distortion $O(1)$ into a normed space $\Y$, then the unit ball of $\Y$ has an almost ellipsoidal section of dimension proportional to $n$; this was obtained in~\cite{Bou87} with the weaker bound $\e\asymp 1/n$. It remains open to determine the slowest possible  rate at which $\e$ may tend to $0$ with $n$ such that the above conclusion still holds. If one aims to get an almost ellipsoidal section of full dimension $n$, rather than allowing its dimension to be a smaller proportion of $n$, then this is still possible, but now the best known  bound~\cite{Bou87} is much weaker, namely this holds for some $\e\ge \exp(-\exp(O(n\log n)))$. See~\cite{Bou87,GNS12,Ost13,LN13,HLN16,HN19,OR20} for more on these intriguing and longstanding discretization issues.} However, the above formulation is exactly what the proof in~\cite{Bou87} gives. The choice to discuss $2$-Hilbertian subspaces was made arbitrarily in~\cite{Bou87}, though for any $\alpha>1$ an $\alpha$-Hilbertian version follows formally; see~\cite{FLM77}.

Apart from the intrinsic geometric value of Theorem~\ref{thm:BourDvo} (see e.g.~\cite{LM93,GM01} for  surveys of the central theme in modern Banach space theory of finding almost ellipsoidal sections), it was used in~\cite{Bou87} (see equation~(5) there) as follows  to deduce an impossibility result for almost extension. Fix $n\in \N$ and $p>2$. By considering (a suitable rescaling of) the classical Mazur map~\cite{Maz29,BL00} we see that there is a  mapping $\phi:S^{n-1}\to S_{\ell_p^n}$ that satisfies the following bounds for any $0<\e<1$ and any $\e$-net $\NN$ of $S^{n-1}$.

\begin{equation*}
\forall a,b\in \NN,\qquad \|a-b\|_{\ell_2^n}\le \|\phi(a)-\phi(b)\|_{\ell_p^n}\lesssim p\|a-b\|_{\ell_2^n}^{\frac{2}{p}}\le p\e^{\frac{2}{p}-1}\|a-b\|_{\ell_2^n},
\end{equation*}
Hence, if we choose $p= 2+1/\log(2/\e)$, then it follows that $\cc_{\ell_p^n}(\NN)=O(1)$.  By taking $D,K\ge 1$ such that $(D,K/D)$ is an $\ell_2^n$-almost extension pair and $D$ is a large enough universal constant, Theorem~\ref{thm:BourDvo} implies that if $\e\asymp 1/\max\{D^2,K\}\asymp 1/K$, then  $\ell_p^n$ has a $2$-Hilbertian subspace $\mathbf{H}$ with $\dim(\mathbf{H})\gtrsim n$. However, the maximal dimension of almost ellipsoidal sections of the unit ball of $\ell_p^n$ was studied in~\cite{BDGJN77}, where it was shown that the above conclusion entails that \begin{equation}\label{eq:2/p}
\dim(\mathbf{H})\lesssim n^{\frac{2}{p}}.
\end{equation} So, $(\log n)/\log(2/\e)\asymp(1-2/p)\log n\lesssim 1$, and therefore we have $K\gtrsim n^c$ for some universal constant $c>0$.

Recalling that the premise of the above discussion was that $(D,K/D)$ is an $\ell_2^n$-almost extension pair, this reasoning of~\cite{Bou87} shows that the factor $n$ in the conclusion~\eqref{eq:bourgain conclusion} of Theorem~\ref{thm:bourgain} cannot be improved to $o(n^c)$. By optimizing the above reasoning, one sees that it is possible to  take $c=\frac14-o(1)$ as $n\to \infty$. However, we checked that if instead of using Theorem~\ref{thm:BourDvo} as a ``black box'' one incorporates its proof and the proof of~\eqref{eq:2/p} in~\cite{BDGJN77} into this reasoning in an optimal way, then it is possible to obtain  the better value $c=\frac12-o(1)$ as $n\to \infty$. We omit the details of this optimization because  it does not yield the nearly optimal bound of Theorem~\ref{thm:l1}. It is important to note that this drawback is inherent to Bourgain's approach, since the source space is $\ell_2^n$, so by Theorem~\ref{thm:NS} it cannot yield a value of $c$ that is larger than $\frac12$.

\begin{remark} The above approach of~\cite{Bou87}  to proving impossibility of almost extension (hence, in particular, also impossibility of actual extension) is an interesting idea that, to the best of our knowledge, differs from all other non-extension results that appeared in the literature. As such, it warrants further scrutiny as a potential route towards related open questions; we defer such investigations to future work. The aforementioned barrier $c\le \frac12$ that makes this approach unsuitable as a potential route towards Theorem~\ref{thm:l1} stems from its strong dependence on Euclidean symmetries, both in terms of the proof in~\cite{Bou87}, as well as its appeal to~\cite{BDGJN77,FLM77}. Even in the Euclidean setting, we do not see how this approach could possibly yield the sharp result of Theorem~\ref{thm:L2 case}, namely it seems that the loss of an unbounded lower order factor is inherent to the route that is taken in~\cite{Bou87}.
\end{remark}

\subsection{Bi-Lipschitz invariance and the sphere compactum}\label{sec:compactum} It is natural to try to prove an almost extension theorem for a Banach space $\X$, i.e., to determine specific $\X$-almost extension pairs $(L,A)$,  by proving such a theorem for some Banach space $\X'$ with a good bound on $d_{\BM}(\X,\X')$, and somehow transferring that theorem back to $\X$ itself. In the same vein, it is natural to try to deduce an impossibility result for almost extension from nets in $S_\X$, from such a result for $\X'$. The following lemma is a small step in that direction that yields in some cases the best bounds that we can currently achieve. As we will see later, there is potential for great improvement here that relates to natural open questions of independent interest.

\begin{lemma}\label{lem:pass between spaces} Fix $L,A,D,K\ge 1$. Suppose that $(\X,\|\cdot\|_\X), (\Y,\|\cdot\|_\Y)$ are normed spaces and $(L,A)$ is a $\Y$-almost extension pair. Let $\mathbf{V}\subset \Y$ be a subspace of $\Y$ for which $d_{\BM}(\X,\mathbf{V})< D$ and there is a $K$-Lipschitz mapping $\psi:S_\Y\to \mathbf{V}$ such that $\psi(y)=y$ for any $y\in S_{\mathbf{V}}=\mathbf{V}\cap S_{\Y}$. Then, $(90KD^2L,9KA)$ is an $\X$-almost extension pair.
\end{lemma}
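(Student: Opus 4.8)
\emph{Proposed proof.} The plan is to reduce the $\X$-almost-extension problem to the $\Y$-almost-extension problem, transporting a net of $S_\X$ into $S_{\mathbf V}$ through the Banach--Mazur isomorphism and using $\psi$ to handle the part of $S_\Y$ that lies away from $\mathbf V$. First, since $d_{\BM}(\X,\mathbf V)<D$, fix a linear isomorphism $T:\X\to\mathbf V$ with $\|x\|_\X\le\|Tx\|_\Y\le D\|x\|_\X$ for all $x\in\X$; then $T$ is $D$-Lipschitz, $T^{-1}:\mathbf V\to\X$ is $1$-Lipschitz, and $T(S_\X)\subset\{v\in\mathbf V:1\le\|v\|_\Y\le D\}$. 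Recalling that the radial retraction $\rho_E(v)=v/\|v\|_E$ onto the unit sphere of a normed space $E$ is $(2/r)$-Lipschitz on $\{v:\|v\|_E\ge r\}$, set $\sigma:S_\X\to S_{\mathbf V}$, $\sigma(x)=Tx/\|Tx\|_\Y$; this is a bijection with inverse $\sigma^{-1}(w)=T^{-1}w/\|T^{-1}w\|_\X$, and both $\sigma$ and $\sigma^{-1}$ are $2D$-Lipschitz. Let $\Phi:\X\to\mathbf V$ be the positively homogeneous extension of $\sigma$ (so $\Phi(0)=0$ and $\Phi(x)=\|x\|_\X\,\sigma(x/\|x\|_\X)$ otherwise); then $\Phi$ agrees with $\sigma$ on $S_\X$, and the standard homogeneous-extension estimate $\|\Phi\|_{\Lip}\le 2\|\sigma\|_{\Lip}+\sup_{S_\X}\|\sigma\|_\Y$ gives $\|\Phi\|_{\Lip}\le 5D$.

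Now let $\e>0$, let $(\mathcal Z,\|\cdot\|_{\mathcal Z})$ be a normed space, let $\NN$ be an $\e$-net of $S_\X$, and let $f:\NN\to\mathcal Z$ be $1$-Lipschitz; subtracting $f(a_0)$ for a fixed $a_0\in\NN$, we may assume $\sup_{a\in\NN}\|f(a)\|_{\mathcal Z}\le 2$. Put $\e'=\e/(2D)$. Since $\sigma^{-1}$ is $2D$-Lipschitz, $\sigma(\NN)$ is $\e'$-separated in $S_\Y$, so it extends to a maximal $\e'$-separated set $\mathcal Q$ in $S_\Y$ with $\sigma(\NN)\subseteq\mathcal Q$; then $\mathcal Q$ is an $\e'$-net of $S_\Y$. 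The crux is to construct $g:S_\Y\to\mathcal Z$ with $\|g\|_{\Lip}\lesssim KD$ (on $\mathcal Q$) and $g(\sigma(a))=f(a)$ for every $a\in\NN$. I would do this as follows. Define $\bar f:S_{\mathbf V}\to\mathcal Z$ by $\bar f(w)=f(a_w)$, with $a_w\in\NN$ nearest to $\sigma^{-1}(w)$; then $\bar f(\sigma(a))=f(a)$, $\sup_{S_{\mathbf V}}\|\bar f\|_{\mathcal Z}\le 2$, and $\|\bar f(w)-\bar f(w')\|_{\mathcal Z}\le 2D\|w-w'\|_\Y+2\e$. On the collar $N=\{m\in S_\Y:\dist(m,S_{\mathbf V})<1/(2K)\}$ one has $\|\psi(m)\|_\Y\ge 1-K\dist(m,S_{\mathbf V})>1/2$ (because $\psi$ fixes $S_{\mathbf V}$ and is $K$-Lipschitz), so $r:=\rho_\Y\circ\psi$ is a well-defined $4K$-Lipschitz map $N\to S_{\mathbf V}$ that is the identity on $S_{\mathbf V}$. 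Let $\eta(m)=\max\{0,1-2K\dist(m,S_{\mathbf V})\}$, a $2K$-Lipschitz cutoff equal to $1$ on $S_{\mathbf V}$ and vanishing off $N$, and set $g:=\eta\cdot(\bar f\circ r)$ on $N$ and $g:=0$ on $S_\Y\setminus N$. Then $g(\sigma(a))=f(a)$, and a short case analysis (according to whether each of $m,m'$ lies in $N$) yields $\|g(m)-g(m')\|_{\mathcal Z}\le 12KD\|m-m'\|_\Y+2\e$ for all $m,m'\in S_\Y$; on the $\e'$-separated set $\mathcal Q$ this becomes $\|g(m)-g(m')\|_{\mathcal Z}\le\Lambda\|m-m'\|_\Y$ with $\Lambda:=16KD$.

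To finish, apply the hypothesis that $(L,A)$ is a $\Y$-almost-extension pair to the $1$-Lipschitz function $g/\Lambda$ on the $\e'$-net $\mathcal Q$ of $S_\Y$, obtaining an $L$-Lipschitz $\widetilde F:\Y\to\mathcal Z$ with $\|\widetilde F(m)-g(m)/\Lambda\|_{\mathcal Z}\le A\e'$ for all $m\in\mathcal Q$, and set $F:=\Lambda\,\widetilde F\circ\Phi$ (adding back the constant removed from $f$). Then $F:\X\to\mathcal Z$ has $\|F\|_{\Lip}\le\Lambda\cdot L\cdot 5D=80KD^2L\le 90KD^2L$, and for $a\in\NN$, using $\Phi(a)=\sigma(a)\in\mathcal Q$ and $g(\sigma(a))=f(a)$, we get $\|F(a)-f(a)\|_{\mathcal Z}=\Lambda\|\widetilde F(\sigma(a))-g(\sigma(a))/\Lambda\|_{\mathcal Z}\le\Lambda A\e'=8KA\e\le 9KA\e$. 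By Definition~\ref{def:almost extension pair} this is exactly the statement that $(90KD^2L,9KA)$ is an $\X$-almost-extension pair.

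I expect the one genuinely delicate point to be the construction of $g$: one would like to write $g=\bar f\circ(\text{Lipschitz retraction }S_\Y\to S_{\mathbf V})$, but when $\mathbf V\subsetneq\Y$ no such retraction exists (for contractibility/homology reasons), and $\psi$, being merely a Lipschitz map into $\mathbf V$, can vanish somewhere on $S_\Y$, so $\rho_\Y\circ\psi$ is neither globally defined nor globally Lipschitz. The remedy is to use $\rho_\Y\circ\psi$ only on the collar $N$, where $\|\psi\|_\Y$ is bounded below, and to switch the construction off smoothly (via $\eta$) before leaving $N$; the care this requires is to check the Lipschitz estimate for the patched $g$ in all cases and to observe that the $O(\e)$ defect arising from rounding $\sigma^{-1}(w)$ back onto $\NN$ must be charged against the $\Theta(\e/D)$-separation of $\mathcal Q$ rather than left as an additive error. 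The remainder is routine bookkeeping of the radial-projection, homogeneous-extension and cutoff constants, which is where the $D^2$, the $K$ and the explicit numerical constants come from.
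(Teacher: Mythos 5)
Your proof is correct and follows essentially the same route as the paper's: a normalized Banach--Mazur isomorphism giving a $2D$-bi-Lipschitz bijection $S_\X\to S_{\mathbf V}$, extension of its image of $\NN$ to a maximal $\frac{\e}{2D}$-separated net of $S_\Y$, construction of a Lipschitz function there via $\psi$ followed by normalization (with the separation of the net absorbing the $O(\e)$ rounding error), an appeal to the $\Y$-almost extension pair, and composition with the $5D$-Lipschitz homogeneous extension. The single point of divergence is how the vanishing of $\psi$ is tamed: you cut off with $\eta=\max\{0,1-2K\dist(\cdot,S_{\mathbf V})\}$ on a collar where $\|\psi\|_\Y\ge\frac12$, whereas the paper multiplies the (recentered) value by $\|\psi(\alpha)\|_\Y/(18KD)$ globally, a single formula that avoids your case analysis but serves the identical purpose and yields the same constants.
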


Prior to proving Lemma~\ref{lem:pass between spaces}, we will derive some of its consequences. For $1\le p\le 2\le q<\infty$, the (Gaussian) type $p$ and cotype $q$ constants~\cite{MP76} of a normed space $(\X,\|\cdot\|_\X)$, denoted $T_p(\X)$ and $C_q(\X)$, respectively,  are the infimal $T,C\ge 1$ such that for every $n\in \N$ and every $x_1,\ldots,x_n\in \X$ the following inequalities hold, where $\mathsf{g}_1,\ldots\mathsf{g}_n$ are independent standard Gaussian random variables.
\begin{equation}\label{eq:type cotype def}
\frac{1}{C}\bigg(\sum_{j=1}^n \|x_j\|_\X^q\bigg)^{\frac{1}{q}}\le \left(\E\bigg[\Big\|\sum_{j=1}^n \mathsf{g}_j x_j\Big\|_\X^2\bigg]\right)^{\frac12}\le T\bigg(\sum_{j=1}^n \|x_j\|_\X^p\bigg)^{\frac{1}{p}}.
\end{equation}

\begin{corollary}\label{cor:use FLM} There exists a universal constant $\kappa>0$ with the following property. Fix $L,A\ge 1$ and $q\ge 2$. Let $(\X,\|\cdot\|_\X)$ be a finite dimensional normed space of sufficiently large dimension in the following sense
\begin{equation}\label{eq:X has large dimension}
\dim(\X)\ge \big(\kappa T_2(\X)^2C_q(\X)L^2\big)^q.
\end{equation}
Suppose  that $(L,A)$ is an $\X$-almost extension pair. Then,
\begin{equation}\label{eq:desired cotype}
A\gtrsim \frac{\dim(\X)^{\frac{1}{q}}}{T_2(\X)^2C_q(\X)L}.
\end{equation}
In the special $\X=\ell_q^n$ for some $n\in \N$, the above assumptions imply that if $n\ge (\kappa \sqrt{q}L^2)^q$, then
\begin{equation}\label{eq:A lower lq}
A\gtrsim \frac{n^{\frac{1}{q}}}{\sqrt{q}L}.
\end{equation}
\end{corollary}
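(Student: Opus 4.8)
The plan is to deduce the corollary from the sharp Euclidean impossibility theorem (Theorem~\ref{thm:L2 case}) by transferring it to $\X$ through a large, well-complemented almost-Euclidean subspace of $\X$, with Lemma~\ref{lem:pass between spaces} serving as the transfer mechanism.

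First I would produce the subspace. By the cotype form of Dvoretzky's theorem (Figiel--Lindenstrauss--Milman, and its refinements), $\X$ contains a subspace $\mathbf V$ with $d_{\BM}(\ell_2^m,\mathbf V)\le 2$ and $m=\dim(\mathbf V)\gtrsim_q \dim(\X)^{2/q}/C_q(\X)^2$; moreover, since this Euclidean subspace can be found in the $\ell$-position, the complementation theorem for Hilbertian subspaces (Figiel--Tomczak-Jaegermann) together with Pisier's estimate $K(\X)\lesssim T_2(\X)$ on the $K$-convexity constant let one take $\mathbf V$ to be the range of a linear projection $P\colon\X\to\mathbf V$ with $\|P\|_{\X\to\mathbf V}\lesssim T_2(\X)$. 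When $\X=\ell_q^n$ these bounds sharpen: $C_q(\ell_q^n)\asymp 1$; a direct computation of the Dvoretzky parameters ($M(\ell_q^n)\asymp\sqrt q\,n^{1/q-1/2}$ while $b(\ell_q^n)=1$) gives the larger dimension $m\asymp q\,n^{2/q}$; and hypercontractivity gives $K(\ell_q^n)\le\sqrt{q-1}$, so $\|P\|\lesssim\sqrt q$.

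Next I would transfer and conclude. Apply Lemma~\ref{lem:pass between spaces} with its space ``$\Y$'' taken to be $\X$ itself, its subspace ``$\mathbf V$'' taken to be $\mathbf V$, its source space ``$\X$'' taken to be $\ell_2^m$, its distortion $D\asymp 1$, and the required retraction taken to be $\psi=P|_{S_\X}$ (so $K\lesssim T_2(\X)$, resp.\ $K\lesssim\sqrt q$, and $\psi$ fixes $S_{\mathbf V}$ pointwise since $P$ is linear). Because $(L,A)$ is an $\X$-almost extension pair, the lemma yields that $(\Lambda,A')$ is an $\ell_2^m$-almost extension pair, where $\Lambda\asymp T_2(\X)L$ and $A'\asymp T_2(\X)A$ (resp.\ $\Lambda\asymp\sqrt q\,L$ and $A'\asymp\sqrt q\,A$). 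The hypothesis~\eqref{eq:X has large dimension} is exactly what is needed to guarantee $6\Lambda\le\sqrt[4]{m}$, so Theorem~\ref{thm:L2 case} applies in dimension $m$ with Lipschitz constant $\Lambda$: it furnishes some $\e>0$, a Banach space $\Y'$, and a $1$-Lipschitz map $f$ on an $\e$-net of $S^{m-1}$ such that every $\Lambda$-Lipschitz map $S^{m-1}\to\Y'$ is at distance $\gtrsim(\sqrt m/\Lambda)\e$ from $f$ on that net. Feeding $\Y'$ and $f$ into the almost extension pair $(\Lambda,A')$ and restricting the resulting map from $\ell_2^m$ to $S^{m-1}$ forces $A'\gtrsim\sqrt m/\Lambda$, i.e.\ $A\gtrsim\sqrt m/(K^2 D^2 L)$. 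Substituting the values of $m$ and $K$ from the previous paragraph gives~\eqref{eq:desired cotype} in general and~\eqref{eq:A lower lq} for $\ell_q^n$.

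The one step I expect to require real care is the first: obtaining a subspace that is simultaneously $O(1)$-isomorphic to $\ell_2^m$, of dimension at least a constant (depending on $q$) times $\dim(\X)^{2/q}/C_q(\X)^2$, and complemented with projection norm $O(T_2(\X))$ --- that is, marrying the cotype version of Dvoretzky's theorem with type-$2$ control on the complementation without picking up spurious logarithmic factors (for $\ell_q^n$ this is clean because $K(\ell_q^n)\asymp\sqrt q$ by hypercontractivity). The remaining work is bookkeeping: one must choose the universal constant $\kappa$ in~\eqref{eq:X has large dimension} so that this single inequality simultaneously forces $\sqrt[4]{m}\ge 6\Lambda$ and places $m$ in the regime where the quoted Dvoretzky estimate is valid, which just amounts to matching the exponent $q$ on the two sides of the inequality.
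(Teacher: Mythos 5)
Your overall architecture coincides with the paper's: extract a large $2$-Euclidean subspace $\mathbf V\subset\X$ via the cotype form of Dvoretzky's theorem, retract $\X$ onto $\mathbf V$ by a bounded linear map fixing $S_{\mathbf V}$, feed this into Lemma~\ref{lem:pass between spaces} with $D=2$ and $K\asymp T_2(\X)$ to convert $(L,A)$ into an $\ell_2^m$-almost extension pair, and then invoke Theorem~\ref{thm:L2 case}; the improved dimension $m\gtrsim q\,n^{2/q}$ in the $\ell_q^n$ case is also exactly what the paper uses, and your final bookkeeping ($6L'\le\sqrt[4]{m}$ forced by~\eqref{eq:X has large dimension}, then $A\gtrsim\sqrt m/(T_2(\X)^2L)$) matches. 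The one place you diverge is the complementation step, which is precisely the step you flag as delicate. The paper obtains the projection not from the $\ell$-position and $K$-convexity but from Maurey's extension theorem: since $d_{\BM}(\ell_2^m,\mathbf V)<2$ gives $C_2(\mathbf V)\le 2$, the identity $\mathbf V\to\mathbf V$ extends to a linear $\psi:\X\to\mathbf V$ of norm at most $T_2(\X)\,C_2(\mathbf V)\le 2T_2(\X)$. This applies to \emph{any} $2$-Hilbertian subspace, in particular to whichever one the Figiel--Lindenstrauss--Milman argument happens to produce, so the difficulty you identify --- whether the cotype-Dvoretzky subspace can simultaneously be placed in the $\ell$-position so that the Figiel--Tomczak-Jaegermann complementation applies without spurious losses --- simply does not arise. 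Your route also leans on a quantitative bound of the form $K(\X)\lesssim T_2(\X)$, which is less off-the-shelf than Maurey's theorem and would need to be sourced carefully, and for $\ell_q^n$ you invoke hypercontractivity separately to get $K(\ell_q^n)\lesssim\sqrt q$, whereas Maurey delivers the same $\sqrt q$ directly from $T_2(\ell_q^n)\lesssim\sqrt q$. So: same proof, except that the paper's choice of complementation lemma dissolves exactly the gap your version leaves open.
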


\begin{proof} By~\cite{FLM77}, there exists a subspace $\mathbf{V}\subset \X$ that satisfies
\begin{equation}\label{eq:dim V lower}
\dim(\mathbf{V})\gtrsim \frac{\dim(\X)^{\frac{2}{q}}}{C_q(\X)^2}\qquad\mathrm{and}\qquad d_{\BM}(\ell_2^{\dim(\mathbf{V})},\mathbf{V})< 2.
\end{equation}
In particular, this implies that $C_2(\mathbf{V})\le  C_2(\ell_2^{\dim(\mathbf{V})})\le 2$. By~\cite{Mau74} the formal identity  $\mathsf{Id}_{\mathbf{V}}:\mathbf{V}\to \mathbf{V}$ extends to a linear mapping $\psi:\X\to \mathbf{V}$ whose operator norm satisfies $\|\psi\|_{\X\to \X}=\|\psi\|_{\Lip}\le T_2(\X)C_2(\mathbf{V})\le 2T_2(\X)$.  Lemma~\ref{lem:pass between spaces} (with $K=2T_2(\X)$ and $D=2$) now shows that $(L',A')$ is an $\ell_2^{\dim(\mathbf{V})}$-almost extension pair, where
\begin{equation}\label{eq:L'A'}
L'\eqdef 720T_2(\X)L\qquad\mathrm{and} \qquad A'\eqdef  18T_2(\X)A.
\end{equation}
 Due to Theorem~\ref{thm:L2 case}, if
 \begin{equation}\label{eq:condition from L2 theorem}
 4320T_2(\X)L\stackrel{\eqref{eq:L'A'}}{=}6L'\le \sqrt[4]{\dim(\mathbf{V})},
 \end{equation}
then necessarily
$$
18T_2(\X)A\stackrel{\eqref{eq:L'A'}}{=}A'\stackrel{\eqref{eq:max on N hilbert}}{\gtrsim} \frac{\sqrt{\dim(\mathbf{V})}}{L'}\stackrel{\eqref{eq:L'A'}}{=}\frac{\sqrt{\dim(\mathbf{V})}}{720T_2(\X)L}\stackrel{\eqref{eq:dim V lower}}{\gtrsim} \frac{\dim(\X)^{\frac{1}{q}}}{T_2(\X)C_q(\X)L}.
$$
This establishes the desired lower bound~\eqref{eq:desired cotype}, because due to the assumption~\eqref{eq:X has large dimension} and  the first inequality in~\eqref{eq:dim V lower}, the requirement~\eqref{eq:condition from L2 theorem} holds provided $\kappa$ is a large enough universal constant.

If $n\in \N$ satisfies $n\ge (\kappa \sqrt{q}L^2)^q$, then the bound~\eqref{eq:A lower lq} for the special case $\X=\ell_q^n$ follows by substituting into the above reasoning the fact that in this case the first inequality in~\eqref{eq:dim V lower} can be improved to $$\dim(\mathbf{V})\gtrsim qn^{\frac{2}{q}},$$ which is sharp up to the implicit universal constant by~\cite{BDGJN77}; this lower bound on $\dim(\mathbf{V})$ is stated in~\cite[Remark~5.7]{MS86}, and it follows by substituting the estimate in~\cite[Lemma~2(4)]{SZ90} into~\cite{Mil71}.
\end{proof}

The following two interesting open questions arise naturally from Corollary~\ref{cor:use FLM}.

\begin{question}\label{Q:n^c} For $L>1$, is there a sequence $\{a_n(L)\}_{n=1}^\infty\subset \R$ satisfying $\lim_{n\to \infty} a_n(L)=\infty$ with the property that if $A\ge 1$ and $(\X,\|\cdot\|_\X)$ is a finite dimensional normed space  for which $(L,A)$ is an $\X$-almost extension pair, then necessarily $A\ge a_{\dim(\X)}(L)$? If so, then in analogy to the first inequality in~\eqref{eq:quote JLS}, could one even take $a_n(L)=\eta(L) n^c$ for some universal constant $c>0$ and some $\eta(L)>0$ that depends only on $L$?
\end{question}

\begin{question}\label{Q:q>2} By~\cite{NS18-extension}, if $q\ge 2$ and $L>C$ for a sufficiently large universal constant $C>0$, then  $(L,K_q \sqrt{n})$ is an $\ell_q^n$-almost extension pair, where $K_q>0$ depends only on $q$. The best lower bound that we can currently derive in this context is~\eqref{eq:A lower lq}. What is the correct asymptotic behavior here?
\end{question}

For $1< p< 2$ we have $T_2(\ell_p^n)\le n^{\frac{1}{p}-\frac12}$ and $C_2(\ell_p^n)\lesssim 1$ for all $n\in \N$ (see~\cite{MP76}). So,  if $\sqrt{n}\ge  n^{\frac{2}{p}-1}(\alpha L)^2$ for some universal constant $\alpha>0$, then by Corollary~\ref{cor:use FLM}   if $(L,A)$ is an $\ell_p^n$-almost extension pair, then
\begin{equation}\label{eq:3/2 bound}
A\gtrsim \frac{n^{\frac32-\frac{2}{p}}}{L}.
\end{equation}
The lower bound~\eqref{eq:3/2 bound} is not vacuous only when $\frac43<p< 2$, and for this range  the above restriction on $n$ is indeed satisfied when $n$ is large enough, namely, the requirement for~\eqref{eq:3/2 bound} becomes $n\ge (\alpha L)^{p/(3p-4)}$. However, in Section~\ref{sec:lp variant} we will prove that if $n\ge n_0(L,p)$ and $(L,A)$ is an $\ell_p^n$-almost extension pair, then
\begin{equation}\label{eq:lp variant}
A\gtrsim \frac{Ln^{\frac{1}{p}}}{e^{\beta\sqrt{(\log n)\log (4L)}}},
\end{equation}
where $\beta>0$ is a universal constant. So,  Lemma~\ref{lem:pass between spaces} is quite weak when $p< 2$, because  when  $L=O(1)$ the bound~\eqref{eq:3/2 bound} is better than~\eqref{eq:lp variant} (which we will prove via an entirely different route)   only if $2-p\lesssim 1/\sqrt{\log n}$.

To formalize the type of transference phenomenon that Lemma~\ref{lem:pass between spaces} aims to achieve (with, as we have seen above, partial success), it is convenient to introduce the following ad-hoc notation.  For a normed space $(\X,\|\cdot\|_\X)$ and $L\ge 1$, let $A_L(\X)$ denote the infimum over those $A\ge 1$ such that $(L,A)$ is an $\X$-almost extension pair. Thus, using this notation, Theorem~\ref{thm:bourgain} and Theorem~\ref{thm:NS} state, respectively, that for $L>C$ we have $A_L(\X)\lesssim \dim(\X)/L$ and, if $\X$ is a Hilbert space, then $A_L(\X)\lesssim \sqrt{\dim(\X)}/L$. Also, Lemma~\ref{lem:pass between spaces} states that
\begin{equation}\label{eq:reqwrite lemma with modulus}
A_{90KD^2L}(\X)\le 9K A_L(\Y),
\end{equation}
where we are using the notation  in the statement of Lemma~\ref{lem:pass between spaces}.

The few estimates that are currently known for $A_L(\X)$ are all consistent with the possibility that there exists an unbounded increasing modulus $\f:[1,\infty)\to [1,\infty)$ (possibly even $\f(L)\lesssim L$ for all $L\ge 1$) such that for every  two finite dimensional normed spaces $(\X,\|\cdot\|_\X),(\Y,\|\cdot\|_\Y)$ with $\dim(\X)=\dim(\Y)$ we have
\begin{equation}\label{eq:is approximation BM invariant}
A_L(\X)\lesssim d_{\mathrm{BM}}(\X,\Y) A_{\f(L)}(\Y).
\end{equation}

We do not have sufficient evidence to conjecture that~\eqref{eq:is approximation BM invariant} holds in full generality, but if that were so, then it would be valuable to prove it. For example, by John's theorem~\cite{Joh48}, if~\eqref{eq:is approximation BM invariant} were true, then Theorem~\ref{thm:bourgain} would follow from Theorem~\ref{thm:NS}. Also, if~\eqref{eq:is approximation BM invariant} were true, then~\eqref{eq:3/4} would follow from Theorem~\ref{thm:NS}  because the Banach--Mazur distance between $\mathsf{S}_1^n$ and the Hilbert space $\mathsf{S}_2^n$ is at most $\sqrt{n}$. In terms of lower bounds, if~\eqref{eq:is approximation BM invariant} were true, then Theorem~\ref{thm:l1} (more precisely, the fact that Theorem~\ref{thm:l1} holds for $\X=\ell_1^n$) would formally imply~\eqref{eq:lp variant}, and it would formally imply a version of Theorem~\ref{thm:L2 case} that is not as sharp, but it is weaker than Theorem~\ref{thm:L2 case} only in terms of lower order factors. Finally, the validity of~\eqref{eq:is approximation BM invariant} would resolve Question~\ref{Q:n^c}, and would also resolve Question~\ref{Q:q>2} up to lower order factors. To justify why this is so for Question~\ref{Q:n^c}, we use~\cite{Gia95}\footnote{In fact, the earlier bound in~\cite{ST89} on the Banach--Mazur distance to $\ell_1^n$ would suffice for our purposes. However, the first (nontrivial) bound on this quantity that was obtained~\cite{BS88} would be insufficient for our purposes.}, which proves that any $n$-dimensional normed space $(\X,\|\cdot\|_\X)$ satisfies
$$
d_{\mathrm{BM}}(\ell_1^n,\X)\lesssim n^{\frac56}.
$$
By combining this estimate with~\eqref{eq:is approximation BM invariant} and Theorem~\ref{thm:l1} (with $\X=\ell_1^n$), we would conclude that
$$
A_L(\X)\gtrsim \frac{\sqrt[6]{n}}{e^{\beta(L)\sqrt{\log n}}},
$$
for every $L\ge 1$ and $n\ge n_0(L)$, where $n_0(L)\in \N$ and $\beta(L)>0$ depend only on $L$. This would answer Question~\ref{Q:n^c} positively. Next, for Question~\ref{Q:q>2}, since for every $q\ge 2$ we have $d_{\mathrm{BM}}(\ell_q^n,\ell_1^n)\lesssim \sqrt{n}$ by~\cite{GKM66}, we would deduce from~\eqref{eq:is approximation BM invariant} in the same manner as above that for every $L\ge 1$ and $n\ge n_0(L)$,
\begin{equation}\label{eq:sqrt n for ell q lower}
A_L(\ell_q^n)\gtrsim \frac{\sqrt{n}}{e^{\beta(L)\sqrt{\log n}}}.
\end{equation}

We will next discuss issues that arise when trying to prove~\eqref{eq:is approximation BM invariant}. These issues also highlight subtleties of the almost extension problem that are not present in the Lipschitz extension problem (observe that for the latter problem, the analogue of~\eqref{eq:is approximation BM invariant} is trivially valid).

Lipschitz almost extension is a nonlinear phenomenon about the geometry of spheres. So, the appearance of  $d_{\mathrm{BM}}(\X,\Y)$ rather than $d_\Lip(S_\X,S_\Y)$ in~\eqref{eq:is approximation BM invariant} could seem unnatural. In other words, \eqref{eq:is approximation BM invariant} includes  a linearization statement by incorporating the linear quantity $d_{\mathrm{BM}}(\X,\Y)$ into a nonlinear setting. We formulated~\eqref{eq:is approximation BM invariant} as we did because such rigidity phenomena do occur quite often, and all the results on almost extension that are currently known do obey~\eqref{eq:is approximation BM invariant}. Moreover, to the best of our knowledge, evaluations of  $d_\Lip(S_\X,S_\Y)$ are not available in the literature. In our opinion, this is both a major omission and a great opportunity to study the geometry of the {\em sphere compactum}, namely the space of all the spheres of $n$-dimensional normed spaces, equipped with the multiplicative semi-metric $d_{\Lip}$. In contrast, the {\em Banach--Mazur compactum}, namely the   space of all $n$-dimensional normed spaces, equipped with the multiplicative semi-metric $d_{\mathrm{BM}}$, is a deeply studied geometric object with rich structure; see~\cite{TJ89}. The following lemma establishes rudimentary relations between $d_{\mathrm{BM}}(\X,\Y)$ and $d_{\Lip}(S_\X,S_\Y)$.

\begin{lemma}\label{lem:dist squared} Suppose that $(\X,\|\cdot\|_\X)$ and $(\Y,\|\cdot\|_\Y)$ are normed spaces with $\dim(\X)=\dim(\Y)<\infty$. Then,
\begin{equation}\label{BM Lip}
 d_\BM(\X,\Y)\lesssim d_\Lip(S_\X,S_\Y)\lesssim d_\BM(\X,\Y)^2.
\end{equation}
\end{lemma}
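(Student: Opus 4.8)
The plan is to prove the two inequalities in~\eqref{BM Lip} separately, each by a direct construction. For the left-hand inequality $d_\BM(\X,\Y)\lesssim d_\Lip(S_\X,S_\Y)$, suppose $\psi:S_\X\to S_\Y$ is a bijection with $s\|x-y\|_\X\le \|\psi(x)-\psi(y)\|_\Y\le Ds\|x-y\|_\X$ for all $x,y\in S_\X$, where $D=d_\Lip(S_\X,S_\Y)$. Extend $\psi$ to a positively homogeneous map $\Psi:\X\to \Y$ by $\Psi(0)=0$ and $\Psi(x)=\|x\|_\X\,\psi(x/\|x\|_\X)$ for $x\neq 0$. The standard computation (as in~\cite[Lemma~2]{JL84}, invoked elsewhere in this paper) shows $\|\Psi\|_{\Lip}\lesssim Ds$ and similarly that the (positively homogeneous extension of the) inverse is $\lesssim 1/s$-Lipschitz, so $\Psi$ is a bijection of $\X$ onto $\Y$ that is bi-Lipschitz with distortion $\lesssim D$. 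Since $\dim(\X)=\dim(\Y)<\infty$, the Rademacher differentiation theorem (differentiate $\Psi$ at a point of differentiability and use homogeneity to transfer the estimate everywhere; cf.~the discussion of $d_\BM=d_\Lip$ in the excerpt) yields a linear isomorphism $T:\X\to \Y$ with $\|T\|\,\|T^{-1}\|\lesssim D$, i.e.~$d_\BM(\X,\Y)\lesssim D$.

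For the right-hand inequality, write $D=d_\BM(\X,\Y)$ and fix a linear $T:\X\to \Y$ with $\|x\|_\X\le \|Tx\|_\Y\le D\|x\|_\X$. This $T$ need not map $S_\X$ to $S_\Y$, so I would compose with the radial projection: define $\psi:S_\X\to S_\Y$ by $\psi(x)=Tx/\|Tx\|_\Y$. One checks $\psi$ is a bijection, with inverse $y\mapsto T^{-1}y/\|T^{-1}y\|_\X$. The only point needing care is the Lipschitz estimate for the radial projection $R:y\mapsto y/\|y\|_\Y$ restricted to the annulus $1\le \|y\|_\Y\le D$, which is the image of $S_\X$ under $T$. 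On this annulus a direct estimate gives $\|R(y)-R(z)\|_\Y\lesssim \|y-z\|_\Y$ with an implicit constant that is $O(1)$ (indeed $R$ is $2$-Lipschitz on $\{\|y\|_\Y\ge 1\}$: $\|y/\|y\|-z/\|z\|\|\le \|y-z\|/\|y\| + |\|z\|-\|y\||\,\|z\|/(\|y\|\|z\|)\le 2\|y-z\|$). Combining, $\psi$ has Lipschitz constant $\lesssim \|T\|\le D$ in one direction and $\lesssim \|T^{-1}\|\le 1$ composed with another $O(1)$ radial projection in the other direction; multiplying the two distortions gives $d_\Lip(S_\X,S_\Y)\lesssim D^2$.

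The main obstacle, and the reason the exponent jumps to $2$ on the right, is precisely that a near-optimal Banach--Mazur isomorphism $T$ does not respect the spheres, so one is forced to post-compose with a radial projection, and this projection — while $O(1)$-Lipschitz — is applied after a map that is already $D$-Lipschitz, so the sphere-map one obtains has distortion $O(D)$ in the forward direction but the backward direction picks up another factor (the inverse sphere-map is the radial projection of $T^{-1}$, whose norm is only controlled by $D$ relative to the already-distorted target), yielding $D^2$ overall. Getting $D$ instead of $D^2$ would require producing a bi-Lipschitz homeomorphism of the spheres directly from $T$ without an independent radial correction on each side, which does not seem possible in general; I expect the quadratic loss here is genuine and reflects a real feature of the sphere compactum versus the Banach--Mazur compactum. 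The left inequality has no such obstacle since differentiation is lossless in finite dimensions up to universal constants.
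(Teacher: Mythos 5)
Your proof of the right-hand inequality $d_\Lip(S_\X,S_\Y)\lesssim d_\BM(\X,\Y)^2$ is the same as the paper's: radially normalize a near-optimal Banach--Mazur isomorphism $T$ and its inverse, using the elementary bound $\left\|\tfrac{u}{\|u\|}-\tfrac{v}{\|v\|}\right\|\le \tfrac{2\|u-v\|}{\max\{\|u\|,\|v\|\}}$; each direction is then $2D$-Lipschitz, giving distortion $4D^2$. For the left-hand inequality you take a genuinely different and, in my view, cleaner route. The paper reduces to smooth norms, differentiates the sphere map $\f$ \emph{tangentially} at a single point $x\in S_\X$ (so the derivative is a linear map only on the hyperplane $\mathrm{Ker}(x^*)$), and must then manufacture a full isomorphism by adjoining a rank-one piece in the normal direction and optimizing over a free parameter $\lambda$, arriving at $d_\BM\le(2\sqrt D+1)^2\le 9D$. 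You instead extend $\psi$ positively homogeneously to all of $\X$ and invoke the standard finite-dimensional identity $d_\BM=d_\Lip$ (which the paper itself records just before the lemma); this avoids the smoothness reduction and the rank-one surgery entirely. One step deserves to be made explicit, though: why the homogeneous extension $\Psi$ has distortion $\lesssim D$ rather than the $\lesssim D^2$ one would naively get by multiplying two Lipschitz constants. The point is that with the normalization $s\|x-y\|_\X\le\|\psi(x)-\psi(y)\|_\Y\le Ds\|x-y\|_\X$, the JL84-type computation gives $\mathrm{Lip}(\Psi)\le 2Ds+1$ and $\mathrm{Lip}(\Psi^{-1})\le 2s^{-1}+1$, whose product is $\asymp D$ because the scaling factor $s$ cancels; and to absorb the additive $+1$ terms you need $Ds\ge1$ and $s\le 1$, which follow by comparing antipodal points (each sphere has diameter exactly $2$). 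With that, $\mathrm{Lip}(\Psi)\cdot\mathrm{Lip}(\Psi^{-1})\le 9D$ and differentiating $\Psi$ at a point of differentiability finishes the argument. Finally, a caveat on your closing remark: the paper does not claim the quadratic loss is genuine --- it explicitly poses the correct dependence of $d_\Lip(S_\X,S_\Y)$ on $d_\BM(\X,\Y)$ as an open question, noting only that the answer for $d_\Lip(S^{n-1},S_\X)$ lies between $\sqrt n$ and $n$.
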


For ease of later reference (in the proof of Lemma~\ref{lem:dist squared} and elsewhere), we record separately the following straightforward ``normalization bound,'' which holds for any normed space $(\EE,\|\cdot\|_\EE)$.
\begin{equation}\label{eq:normalize 2}
\forall u,v\in \EE\setminus \{0\},\qquad \left\|\frac{1}{\|u\|_\EE}u-\frac{1}{\|v\|_\EE}v\right\|_\EE\le \frac{2\|u-v\|_\EE}{\max\{\|u\|_\EE,\|v\|_\EE\}}.
\end{equation}
Indeed, if $\|v\|_\EE=\max\{\|u\|_\EE,\|v\|_\EE\}$, then
\begin{multline*}
\left\|\frac{1}{\|u\|_\EE}u-\frac{1}{\|v\|_\EE}v\right\|_\EE=\left\|\left(\frac{1}{\|u\|_\EE}-\frac{1}{\|v\|_\EE}\right)u+\frac{1}{\|v\|_\EE}(u-v)\right\|_\EE\\\le \left(\frac{1}{\|u\|_\EE}-\frac{1}{\|v\|_\EE}\right)\|u\|_\EE+\frac{\|u-v\|_\EE}{\|v\|_\EE}=\frac{\|v\|_\EE-\|u\|_\EE+\|u-v\|_\EE}{\|v\|_\EE}\le \frac{2\|u-v\|_\EE}{\|v\|_\EE}=\frac{2\|u-v\|_\EE}{\max\{\|u\|_\EE,\|v\|_\EE\}}.
\end{multline*}

\begin{proof}[Proof of Lemma~\ref{lem:dist squared}] Fix $D\ge 1$ and a linear operator $T:\X\to \Y$ satisfying $\|x\|_\X\le \|Tx\|_\Y\le D\|x\|_\X$ for  $x\in \X$.  Composition with  normalization yields natural bijections  $\f:S_\X\to S_\Y$ and $\f^{-1}:S_\Y\to S_\X$ that are given by
\begin{equation}\label{eq:normalize T}
\forall x\in S_\X,\qquad \f(x)=\frac{1}{||Tx\|_\Y}Tx\qquad\mathrm{and}\qquad \forall y\in S_\Y,\qquad \f^{-1}(y)=\frac{1}{\|T^{-1}y\|_\X}T^{-1}y.
\end{equation}
Note that the fact that the two mappings in~\eqref{eq:normalize T} are indeed inverse to each other uses only the (positive) homogeneity of $T$ rather than its linearity in full. By~\eqref{eq:normalize 2} (for $\EE=\Y$), every $x,x'\in S_\X$ satisfy
\begin{equation}\label{eq:phi1}
\|\f(x)-\f(x')\|_\Y\le \frac{2\|Tx-Tx'\|_\Y}{\max\{\|Tx\|_\Y,\|Tx'\|_\Y\}}\le \frac{2D\|x-x'\|_\X}{\max\{\|x\|_\X,\|x'\|_\X\}}=2D\|x-x'\|_\X.
\end{equation}
In the same vein, by~\eqref{eq:normalize 2} (for $\EE=\X$) every $y,y'\in S_\Y$ satisfy
\begin{equation}\label{eq:phi2} \|\f^{-1}(y)-\f^{-1}(y')\|_\Y\le \frac{2\|T^{-1}y-T^{-1}y'\|_\Y}{\max\{\|T^{-1}y\|_\X,\|T^{-1}y'\|_\X\}}\le \frac{2\|y-y'\|_\Y}{\max\{\|y\|_\Y/D,\|y'\|_\Y/D\}}=2D\|y-y'\|_\Y.
\end{equation}
This shows that $d_\Lip(S_\X,S_\Y)\le 4d_\BM(\X,\Y)^2$, namely the second inequality in~\eqref{BM Lip} is proved.

It suffices to justify the first inequality in~\eqref{BM Lip} when $\|\cdot\|_\X,\|\cdot\|_\Y$ are norms on $\R^n$ that are smooth on $\R^n\setminus\{0\}$, where $n=\dim(\X)=\dim(\Y)$. Fix $D\ge 1$ and $\sigma>0$. Suppose that $\f:S_\X\to S_\Y$  is a bijection that satisfies $\sigma\|x-x'\|_\X\le \|\f(x)-\f(x')\|_\Y\le D\sigma\|x-x'\|_\X$ for all $x,x'\in S_\X$. Using Rademacher's differentiation theorem~\cite{Rad19}, there is $x\in S_\X$ at which $\f$ is differentiable. Thus, if $x^*\in S_{\X^*}$ is the supporting functional of $x$, i.e., $x^*(x)=1$, then there is a linear operator $S:\mathrm{Ker}(x^*)\to \Y$ that satisfies $\sigma \|x'\|_\X\le \|Sx'\|_\Y\le \sigma D\|x'\|_\X$ for  $x'\in \mathrm{Ker}(x^*)$. Fix $y^*\in S_{\Y^*}$ whose kernel is $S\mathrm{Ker}(x^*)\subset \Y$ and  $y\in S_\Y$ with $y^*(y)=1$. Fix also $\lambda>0$ and define $T:\X\to \Y$ by setting $Tx'=\lambda S(x'-x^*(x')x)+x^*(x')y$ for $x'\in \X$, which is well defined because $x'-x^*(x')x\in \mathrm{Ker}(x^*)$. Then, $T$ is invertible and for $y' \in \Y$  we have $T^{-1}y'=\lambda^{-1}S^{-1}(y'-y^*(y')y)+y^*(y')x$. It is mechanical to check (using  $\|S\|_{\mathrm{Ker}(x^*)\to \Y}\le \sigma D$, $\|S^{-1}\|_{S\mathrm{Ker}(x^*)\to \X}\le \sigma^{-1}$ and $\|x^*\|_{\X^*}=\|y^*\|_{\Y^*}=1$) that $\|T\|_{\X\to \Y}\le 2\lambda\sigma D+1$ and $\|T^{-1}\|_{\Y\to \X}\le 2\lambda^{-1}\sigma^{-1}+1$. So, $d_\BM(\X,\Y)\le (2\lambda\sigma D+1)(2\lambda^{-1}\sigma^{-1}+1)$. By optimizing over $\lambda>0$, we conclude that $d_\BM(\X,\Y)\le (2\sqrt{D}+1)^2$, thus proving that $ d_\BM(\X,\Y)\le 9d_\Lip(S_\X,S_\Y)$.
\end{proof}

The quadratic dependence on $d_{\mathrm{BM}}(\X,\Y)$ in~\eqref{BM Lip} is the source of the quadratic dependence on $D$ that we obtain in~\eqref{eq:reqwrite lemma with modulus}. It would be worthwhile to investigate what is the correct dependence here. Note that any power-type dependence on $d_{\mathrm{BM}}(\X,\Y)$ in~\eqref{eq:is approximation BM invariant} that is better than quadratic would imply an improvement over~\eqref{eq:A lower lq} for large enough $q$, namely if we replace~\eqref{eq:is approximation BM invariant} by  $A_L(\X)\lesssim d_{\mathrm{BM}}(\X,\Y)^{2(1-\d)} A_{\f(L)}(\Y)$ for some $\d>0$, then we would obtain the following variant of~\eqref{eq:sqrt n for ell q lower} for every $q>2$, $L\ge 1$ and $n\ge n_0(L)$.
$$
A_L(\ell_q^n)\gtrsim \frac{n^\d}{e^{\beta(L)\sqrt{\log n}}}.
$$

Among the many basic open questions about the sphere compactum, we mention the following.
\begin{itemize}
\item What is the analogue of John's theorem~\cite{Joh48} for the sphere compactum? Namely, how large can $d_{\Lip}(S^{n-1},S_\X)$ be for an $n$-dimensional normed space $(\X,\|\cdot\|_\X)$? By Lemma~\ref{lem:dist squared} (using John's theorem), the answer is bounded between universal constant multiples of $\sqrt{n}$ and $n$.

    \item What is the analogue of Gluskin's theorem~\cite{Glu81} for the sphere compactum? Namely, how large can $d_{\Lip}(S_\X,S_\Y)$ be for $n$-dimensional normed spaces $(\X,\|\cdot\|_\X)$ and $(\Y,\|\cdot\|_\Y)$? By Lemma~\ref{lem:dist squared} (using Gluskin's theorem), the answer is bounded between universal constant multiples of $n$ and $n^2$. What is the asymptotic growth rate (as $n\to \infty$) of the typical bi-Lipschitz distance between the spheres of two independently chosen Gluskin bodies?

        \item For $p,q\in [1,\infty]$, what is a the asymptotic growth rate as $n\to \infty$ of $d_{\Lip}(S_{\ell_p^n},S_{\ell_q^n})$? See~\cite{GKM66} for the asymptotic evaluation of the Banach--Mazur distance between $\ell_p^n$ and $\ell_q^n$.
    \end{itemize}

Even if we have a good upper bound on $d_{\Lip}(S_\X,S_\Y)$ at our disposal, whether in terms of $d_{\mathrm{BM}}(\X,\Y)$ or otherwise, then it is not clear how to transfer almost extension theorems (or theorems about the impossibility thereof) between $\X$ and $\Y$ (unlike classical Lipschitz extension, for which this would be immediate).

One relatively minor issue is that a bi-Lipschitz mapping need not send an $\e$-net to an $\e'$-net of its image for any $\e'>0$. This could be overcome in multiple ways, one of which is working with the notion of $(\e,\d)$-nets, in place of the more typical use of $\e$-nets. Recall the following terminology. Let $(\MM,d_\MM)$ be a metric space. For $\e,\d>0$, a subset $\NN\subset \MM$ is called an $(\e,\d)$-net of $\MM$ if it is  $\e$-dense and $\d$-separated; the former means that $\inf_{a\in \NN} d_\MM(x,a)\le \e$ for every $x\in \MM$, and the latter means that $d_\MM(a,b)\ge \d$ for every distinct $a,b\in \NN$. Thus, the usual notion of $\e$-net (which we have been using tacitly) is the same as an $(\e,\e)$-net. Fix two metric spaces $(\MM,d_\MM), (\MM',d_{\MM'})$ and $\e,\d,\alpha,\beta>0$. Suppose that $f:\MM\to \MM'$ is a mapping that satisfies the bi-Lipschitz condition $\alpha d_\MM(x,y)\le d_{\MM'}(x,y)\le \beta d_\MM(x,y)$ for all $x,y\in \MM$. Then, the image $f(\NN)$ of any $(\e,\d)$-net $\NN$ of $\MM$ is a $(\beta \e,\alpha\d)$-net of $f(\MM)$. So, a bi-Lipschitz mapping sends  an $(\e,\d)$-net to an $(\e',\d')$-net of its image for suitable parameters $\e',\d'>0$ (that are simple to control). Traditionally, almost extension theorems were proven for functions that are defined on $\e$-nets, but  obtaining (either existence or impossibility) results for $(\e,\d)$-nets rather than for $\e$-nets requires only mechanical adaptations of their proofs. We chose not to do so here in order to avoid the need to track two parameters throughout, but this could be easily taken up later if needed, by inspecting existing proofs (both for the positive extension results in the literature, and the impossibility results of the present work).

The following is a much more serious issue, which we do not know how to overcome and it may end up being an inherent obstacle to the type of transference principle for almost extension that we are hoping for. Fix  normed spaces $(\X,\|\cdot\|_\X), (\X,\|\cdot\|_{\X'}), (\Y,\|\cdot\|_\Y)$ with $\dim(\X)=\dim(\X')<\infty$. Suppose that $s>0$ and $D\ge 1$, and  $\phi:S_\X\to S_{\X'}$ is a bijection satisfying $s\|x-y\|_\X\le \|\phi(x)-\phi(y)\|_{\X'}\le Ds\|x-y\|_\X$ for $x,y\in S_\X$. If $\NN\subset S_\X$ is a net of $S_\X$ and $f:\NN\to \Y$ is $1$-Lipschitz, then $g=sf\circ \phi^{-1}:\phi(\NN)\to \Y$ is also $1$-Lipschitz.  Fix $L\ge 1$. Suppose that $G:S_{\X'}\to \Y$ is $L$-Lipschitz and denote $\alpha =\max_{u\in \phi(\NN)} \|G(u)-g(u)\|_\Y$. So,  $F=s^{-1}G\circ \phi:S_\X\to \Y$ is $(DL)$-Lipschitz and satisfies $\max_{a\in \N}\|F(a)-f(a)\|_\Y\le s^{-1}\alpha$. Thus, combining an almost extension theorem for $\X'$ with a ``vanilla'' transference of the approximating mapping $G$ back to $\X$ using the given bi-Lipschitz equivalence produces a mapping $F$ that approximates $f$ in a controlled manner, but whose Lipschitz constant is at least the distortion $D$, i.e., it  cannot be specified arbitrarily in advance per Bourgain's formulation of the almost extension problem. This encapsulates an important (and crucial for applications) aspect of the new challenge that the almost extension problem raises.   Nevertheless, the fact that~\eqref{eq:is approximation BM invariant} is consistent with the known results on almost extension could be viewed as an indication that there might a  route towards these questions that is more circuitous that the above naive reasoning.

\begin{proof}[Proof of Lemma~\ref{lem:pass between spaces}]  By Lemma~\ref{lem:dist squared} (for concreteness, we are quoting the  constants  in~\eqref{eq:phi1} and~\eqref{eq:phi2}, though for our purposes their   values do not have a substantial role), there is a bijection $\f:S_\X\to S_{\mathbf{V}}$ satisfying
\begin{equation}\label{eq:phi 2D}
\forall x,y\in S_\X,\qquad \frac{\|x-y\|_\X}{2D}\le \|\f(x)-\f(y)\|_\Y\le 2D \|x-y\|_\X.
\end{equation}
Fix $\e>0$ and an $\e$-net $\NN$ of $S_\X$. For each $x\in S_\X$  fix a net point $\nu(x)\in \NN$ satisfying $\|\nu(x)-x\|_\X\le \e$, with the convention $\nu(a)=a$ for every $a\in \NN$.  As $\f(\NN)\subset S_{\mathbf{V}}\subset S_\Y$ is $\frac{\e}{2D}$-separated by~\eqref{eq:phi 2D}, there is an $\frac{\e}{2D}$-net $\MM$ of $S_\Y$ that contains $\f(\NN)$ (simply take $\MM$ to be a maximal $\frac{\e}{2D}$-separated subset of $S_\Y$ containing $\f(\NN)$).

Suppose that $(\ZZ,\|\cdot\|_\ZZ)$ is a Banach space and that $f:S_\X\to \ZZ$ is $1$-Lipschitz. Fix an arbitrary net point $a_0\in \NN$. Define $g:\MM\to \ZZ$ by setting $g(\alpha)=f(a_0)$ whenever $x\in \MM$ satisfies $\psi(\alpha)=0$, and
\begin{equation}\label{def:g outside fiber}
\forall \alpha\in \MM\setminus \psi^{-1}(0),\qquad g(\alpha)\eqdef \frac{\|\psi(\alpha)\|_{\Y}}{18KD}\left(f\circ\nu\circ\f^{-1}\Big(\frac{\psi(\alpha)}{\|\psi(\alpha)\|_\Y}\Big)-f(a_0)\right)+f(a_0).
\end{equation}
Observe that since $\psi$ is the identity mapping on $S_{\mathbf{V}}\supseteq \f(\NN)$ and $\nu$ is the identity mapping on $\NN$, we have
\begin{equation}\label{eq:g is rescaled f}
\forall a\in \NN,\qquad g\circ\f(a)=\frac{1}{18KD}f(a).
\end{equation}
We claim that $\|g(\alpha)-g(\beta)\|_{\ZZ} \le \|\alpha-\beta\|_\Y$ for all $\alpha,\beta\in \MM$ , i.e., that $g$ is $1$-Lipschitz on $\MM$. To this end,  we may assume that  $\psi(\alpha)\neq 0$ and $\alpha\neq \beta$. The latter assumption implies that
\begin{equation}\label{eq:alpha beta far}
\|\alpha-\beta\|_\Y\ge  \frac{\e}{2D}.
\end{equation}
Consider the following identity, which holds under the convention that the terms in~\eqref{eq:galpha-gbeta2}, \eqref{eq:galpha-gbeta3} and~\eqref{eq:galpha-gbeta4} below vanish if $\psi(\beta)=0$.
\begin{align}
 g(\alpha)-g(\beta)&= \frac{\|\psi(\alpha)\|_\Y-\|\psi(\beta)\|_\Y}{18KD}\left(f\circ\nu\circ\f^{-1}\Big(\frac{\psi(\alpha)}{\|\psi(\alpha)\|_\Y}\Big)-f(a_0)\right)
\nonumber\\
&\qquad+\frac{\|\psi(\beta)\|_\Y}{18KD}\left(f\circ\f^{-1}\Big(\frac{\psi(\alpha)}{\|\psi(\alpha)\|_\Y}\Big)-
f\circ\f^{-1}\Big(\frac{\psi(\beta)}{\|\psi(\beta)\|_\Y}\Big)\right)\label{eq:galpha-gbeta2}\\
&\qquad +\frac{\|\psi(\beta)\|_\Y}{18KD}\left(f\circ\nu\circ\f^{-1}\Big(\frac{\psi(\alpha)}{\|\psi(\alpha)\|_\Y}\Big)- f\circ\f^{-1}\Big(\frac{\psi(\alpha)}{\|\psi(\alpha)\|_\Y}\Big)\right)\label{eq:galpha-gbeta3}\\
&\qquad +\frac{\|\psi(\beta)\|_\Y}{18KD}\left( f\circ\f^{-1}\Big(\frac{\psi(\beta)}{\|\psi(\beta)\|_\Y}\Big)-f\circ\nu\circ\f^{-1}\Big(\frac{\psi(\beta)}{\|\psi(\beta)\|_\Y}\Big)\right)
\label{eq:galpha-gbeta4}.
\end{align}
Since $f$ is $1$-Lipschitz,  $\psi$ is $K$-Lipschitz and $\f^{-1}$ is $(2D)$-Lipschitz, it follows that
\begin{equation}\label{eq:three tirangle}
\|g(\alpha)-g(\beta)\|_\ZZ\le  \frac{\|\alpha-\beta\|_\Y}{9D}
+\frac{\|\psi(\beta)\|_\Y}{9K}\left\| \frac{\psi(\alpha)}{\|\psi(\alpha)\|_\Y}-
\frac{\psi(\beta)}{\|\psi(\beta)\|_\Y}\right\|_\Y
+\frac{\|\psi(\beta)\|_\Y}{9KD}\sup_{x\in S_\X} \|x-\nu(x)\|_\X.
\end{equation}
Because $\psi$ is $K$-Lipschitz, using~\eqref{eq:normalize 2}  the second term in the right hand side of~\eqref{eq:three tirangle} is at most $\frac{2}{9}\|\alpha-\beta\|_\Y$. Also, since $\f(a_0)\in S_\mathbf{V}$ we have $\psi\circ \f(a_0)=\f(a_0)$, and therefore
$$\|\psi(\beta)\|_\Y\le \|\psi(\beta)-\psi\circ \f(a_0)\|_\Y+\|f(a_0)\|_\Y\le K(\|\beta-\f(a_0)\|_\Y)+1\le K(\|\beta\|_\Y+\|\f(a_0)\|_\Y)+1=2K+1.
$$
 By substituting these observations into~\eqref{eq:three tirangle} we conclude as follows that $g$ is indeed $1$-Lipschitz.
$$
\|g(\alpha)-g(\beta)\|_\ZZ\le  \frac{\|\alpha-\beta\|_\Y}{9D}
+\frac29\|\alpha-\beta\|_\Y
+\frac{2K+1}{9KD}\e\stackrel{\eqref{eq:alpha beta far}}{\le} \left(\frac{1}{9D}+\frac29+\frac{2(2K+1)}{9K}\right)\|\alpha-\beta\|_\Y\le \|\alpha-\beta\|_\Y.
$$

Denote by $\Phi:\X\to \mathbf{V}$ be the homogeneous extension of $\f$, namely $\Phi(0)=0$ and $\Phi(x)=\|x\|_\X\f(x/\|x\|_\X)$ when $x\in \X\setminus \{0\}$. Then,  because $\f$ is $(2D)$-Lipschitz and takes values in $S_{\mathbf{V}}$, by Lemma~2 in~\cite{JL84} we know that $\Phi$ is $(5D)$-Lipschitz.  Since $(L,A)$ is a $\Y$-almost extension pair and $\MM$ is an $\frac{\e}{2D}$-net of $S_\Y$, there exists an $L$-Lipschitz mapping $G:\Y\to \ZZ$ that satisfies
\begin{equation}\label{eq:G close}
\sup_{\alpha\in \MM} \|G(\alpha)-g(\alpha)\|_\ZZ\le \frac{A\e}{2D}.
\end{equation}
If we define $F\eqdef 18KDG\circ\Phi:\X\to \ZZ$, then $F$ is $90KD^2L$-Lipschitz and
\begin{equation*}
\max_{a\in \NN} \|F(a)-f(a)\|_\ZZ\stackrel{\eqref{eq:g is rescaled f}}{=}18KD \max_{a\in \NN}\|G\circ \f(a)-g\circ \f(a)\|_\ZZ\stackrel{\eqref{eq:G close}}{\le} 9KA\e.\tag*{\qedhere}
\end{equation*}
\end{proof}

\begin{remark} Extension from nets is an especially important instance of the Lipschitz extension problem, so it is worthwhile to introduce the following notation. Given a metric space $(\MM,d_\MM)$, define $\ee_{\mathsf{net}}(\MM)$ to be the supremum of $\ee(\MM,\NN)$ over all $\NN\subset \MM$ which is an $\e$-net of $\MM$ for some $\e>0$. Note that if $\X$ is a normed space, then by rescaling we see that the quantity $\ee(\X)$ is given by the case $\e=1$ of this definition, namely  $\ee(\X)$ is the supremum of $\ee(\X,\NN)$ over all $\NN\subset \X$ which is a $1$-net of $\X$. It is simple to show that
\begin{equation*}
\ee_{\mathsf{net}}(S_\X)\lesssim \ee_{\mathsf{net}}(B_\X)\lesssim \ee_{\mathsf{net}}(\X).
\end{equation*}
If $\dim(\X)<\infty$, then it would also be worthwhile to study the quantity $\ee(\X,\NN)$ for any specific $1$-net $\NN$ of $\X$, because by~\cite{BK98,McM98} nets of $\X$ need not be bi-Lipschitz equivalent to each other. If $\dim(\X)=\infty$, then by~\cite{LMP00} any two nets of $\X$ are bi-Lipschitz equivalent to each other, but when the dimension is infinite extending Lipschitz functions from nets sometimes exhibits pathological behavior~\cite{Nao15}.\footnote{We checked that by a careful optimization of the proof of equation (1) in~\cite{Nao15} (which is itself inspired by~\cite{Lin64,Nao01}), one could deduce a nontrivial impossibility result for the almost extension problem whose proof is different from those in the present article as well as the proof in~\cite{Bou87}, though the resulting bound is  asymptotically weaker than what we obtain here.}

Basic questions remain open in the above context. E.g., what is the asymptotic growth rate of $\ee_{\mathsf{net}}(\ell_2^n)$? What is the asymptotic growth rate of the supremum of $\ee_{\mathsf{net}}(\X)$ over all $n$-dimensional normed spaces $\X$?

 One could also formulate the almost extension problem for an arbitrary metric space $\MM$ by asking for those $A,L\ge 1$ such that for any $\e>0$, any $\e$-net $\NN$ of $\MM$, any normed space $(\Y,\|\cdot\|_\Y)$ and any $1$-Lipschitz mapping $f:\NN\to \Y$ there is an $L$-Lipschitz mapping $F:\MM\to \Y$ that satisfies $\|F(a)-f(a)\|_\Y\le A\e$ for every net point $a\in \NN$. Investigating this question in such generality is a wide-open research direction. We warn  that when $\MM=S_\X$ for some normed space $\X$, this does  not coincide with  Bourgain's formulation, which asks for $F$ to be defined on all of $\X$ rather than just on $S_\X$. But, it is simple to check that these formulations are essentially the same if one does not mind losing universal constant factors.
\end{remark}

\section{Approximate smoothing of powers of the crinkled arc}\label{sec:l1 proof}

Here we will prove Theorem~\ref{thm:l1}. The proof is short and entirely different from the approach of~\cite{Bou87} that we recalled in Section~\ref{sec:ext intro}. We will also discuss generalizations, and variants for $\ell_p^n$ when $1<p\le 2$.

\begin{proof}[Proof of Theorem~\ref{thm:l1}]   Define $\gamma:\R\to \{0,1\}^{\R}$ as follows (the curve $\gamma$ is one possible representation of Hamlos'  ``crinkled arc,'' but it is in fact a canonical object, as shown by the uniqueness theorem of~\cite{Vit75}).
\begin{equation}\label{eq:crinkled}
\forall s\in \R,\qquad \gamma(s)\eqdef \1_{\left[\min\{s_{\phantom{j}}\!\!\!\!,0\},\max\{s,0\}\right]}.
\end{equation}
A straightforward computation shows that $\|\gamma(s)-\gamma(t)\|_{L_p(\R)}=|s-t|^{\frac{1}{p}}$ for every $p>0$ and $s,t\in \R$.

Fix $1\le q\le 2$ and $0<\e\le \frac12$ that will be determined later, and define $f=f_{q,\e}:\mathcal{N}\to \ell_q^n(L_q(\R))$ by
$$
\forall a=(a_1,\ldots,a_n)\in \mathcal{N},\qquad f(a)\eqdef \e^{1-\frac{1}{q}}\big(\gamma(a_1),\ldots,\gamma(a_n)\big).
$$
Because $\|a-b\|_{\ell_1^n}\ge \e$ for all distinct $a=(a_1,\ldots,a_n),b=(b_1,\ldots,b_n)\in \mathcal{N}$, we have
\begin{equation}\label{eq:crinckle snowflake}
\|f(a)-f(b)\|_{\ell_q^n(L_q(\R))}=\e^{1-\frac{1}{q}} \bigg(\sum_{j=1}^n \|\gamma(a_j)-\gamma(b_j)\|_{L_q(\R)}^q\bigg)^{\frac{1}{q}}=\e^{1-\frac{1}{q}}\|a-b\|_{\ell_1^n}^{\frac{1}{q}}\le \|a-b\|_{\ell_1^n}.
\end{equation}
This shows that $f$ is $1$-Lipschitz. Suppose that $F:S_{\ell_1^n}\to \ell_q^n(L_q(\R))$ is $L$-Lipschitz. We will derive~\eqref{eq:max on N} for $\Y=\ell_q^n(L_q(\R))$ and a judicious choice of $q=q(n,L)$ and  $\e=\e(n,L)$ by considering  the behavior of $F$ only on the union of $\mathcal{N}$ and the (normalized) discrete hypercube $\frac1{n}\{-1,1\}^n\subset S_{\ell_1^n}$.

By applying the Lipschitz condition along each edge of the hypercube we see that
\begin{equation}\label{eq:upper edges}
\max_{\substack{x\in \{-1,1\}^n\\j\in \n}}\Big\|F\Big(\frac{1}{n}x\Big)-F\Big(\frac{1}{n}(x_1,x_2,\ldots,x_{j-1},-x_j,x_{j+1},\ldots,x_n)\Big)\Big\|_{\ell_q^n(L_q(\R))}\le \frac{2L}{n}.
\end{equation}
Next, for each $x\in \frac{1}{n}\{-1,1\}^n\subset S_{\ell_1^n}$ fix $a_x,b_x\in \mathcal{N}$ with $\left\|\frac{1}{n}x-a_x\right\|_{\ell_1^n}\le \e$ and $\left\|-\frac{1}{n}x-b_x\right\|_{\ell_1^n}\le \e$. Then,
$$
\|a_x-b_x\|_{\ell_1^n}\ge \frac{2}{n}\|x\|_{\ell_1^n}-\Big\|\frac{1}{n}x-a_x\Big\|_{\ell_1^n}-\Big\|-\frac{1}{n}x-b_x\Big\|_{\ell_1^n}\ge 2-2\e\ge 1,
$$
where the last step holds because $\e\le\frac12$. Hence,
$$
\|f(a_x)-f(b_x)\|_{\ell_q^n(L_q(\R))}\stackrel{\eqref{eq:crinckle snowflake}}{=}\e^{1-\frac{1}{q}}\|a_x-b_x\|_{\ell_1^n}^{\frac1{q}}\ge \e^{1-\frac{1}{q}}.
$$
Also,
\begin{multline*}
\Big\|F\Big(\frac{1}{n}x\Big)-f(a_x)\Big\|_{\ell_q^n(L_q(\R))}\le \Big\|F\Big(\frac{1}{n}x\Big)-F(a_x)\Big\|_{\ell_q^n(L_q(\R))}+\|F(a_x)-f(a_x)\|_{\ell_q^n(L_q(\R))}\\
\le L\Big\|\frac{1}{n}x-a_x\Big\|_{\ell_1^n} +\max_{a\in \mathcal{N}}\|F(a)-f(a)\|_{\ell_q^n(L_q(\R))}\le L\e +\max_{a\in \mathcal{N}}\|F(a)-f(a)\|_{\ell_q^n(L_q(\R))}.
\end{multline*}
The symmetric reasoning shows that
$$
\Big\|F\Big(-\frac{1}{n}x\Big)-F(b_x)\Big\|_{\ell_q^n(L_q(\R))}\le L\e +\max_{a\in \mathcal{N}}\|F(a)-f(a)\|_{\ell_q^n(L_q(\R))}.
$$
Consequently,
\begin{align*}
\Big\|F\Big(\frac{1}{n}x\Big)&-F\Big(-\frac{1}{n}x\Big)\Big\|_{\ell_q^n(L_q(\R))}\\&\ge \|f(a_x)-f(b_x)\|_{\ell_q^n(L_q(\R))}-\Big\|F\Big(\frac{1}{n}x\Big)-f(a_x)\Big\|_{\ell_q^n(L_q(\R))}-\Big\|F\Big(-\frac{1}{n}x\Big)-
F(b_x)\Big\|_{\ell_q^n(L_q(\R))}\\
&\ge \e^{1-\frac{1}{q}}- 2L\e -2\max_{a\in \mathcal{N}}\|F(a)-f(a)\|_{\ell_q^n(L_q(\R))}.
\end{align*}
In other words,
\begin{equation}\label{eq:lower diagonals}
\min_{x\in \{-1,1\}^n} \Big\|F\Big(\frac{1}{n}x\Big)-F\Big(-\frac{1}{n}x\Big)\Big\|_{\ell_q^n(L_q(\R))}\ge \e^{1-\frac{1}{q}}- 2L\e -2\max_{a\in \mathcal{N}}\|F(a)-f(a)\|_{\ell_q^n(L_q(\R))}.
\end{equation}

A fundamental inequality of Enflo~\cite{Enf69} asserts that any function $\varphi:\{-1,1\}^n\to \ell_q^n(L_q(\R))$ satisfies
\begin{equation}\label{eq:min max}
\min_{x\in \{-1,1\}^n} \|\varphi(x)-\varphi(-x)\|_{\ell_q^n(L_q(\R))}\le n^{\frac{1}{q}} \max_{\substack{x\in \{-1,1\}^n\\j\in \n}}\|\varphi(x)-\varphi(x_1,x_2,\ldots,x_{j-1},-x_j,x_{j+1},\ldots,x_n)\|_{\ell_q^n(L_q(\R))}.
\end{equation}
In combination with~\eqref{eq:upper edges} and~\eqref{eq:lower diagonals}, we therefore conclude that
\begin{equation}\label{eq:sup lower 1}
\max_{a\in \mathcal{N}}\|F(a)-f(a)\|_{\ell_q^n(L_q(\R))}\ge \frac12 \e^{1-\frac{1}{q}}-L\e-Ln^{\frac{1}{q}-1}= \Big((2L)^{-1}\e^{-\frac{1}{q}}-\e^{-1}n^{\frac{1}{q}-1}-1\Big)L\e.
\end{equation}
Hence, if we denote
$$
M(n,L)\eqdef \max_{\substack{0<\e\le \frac12\\ 1\le q\le 2}}\Big((2L)^{-1}\e^{-\frac{1}{q}}-\e^{-1}n^{\frac{1}{q}-1}\Big),
$$
and choose $0<\e_\opt=\e_\opt(n,L)\le \frac12$ and $1\le q_\opt=q_\opt(n,L)\le 2$ such that
\begin{equation}\label{eq:maximizers}
(2L)^{-1}\e_\opt^{-\frac{1}{q_\opt}}-\e_\opt^{-1}n^{\frac{1}{q_\opt}-1}=M(n,L),
\end{equation}
then~\eqref{eq:sup lower 1} implies that
\begin{equation}\label{eq:MnL version}
\max_{a\in \mathcal{N}}\|F(a)-f(a)\|_{\ell_{q_\opt}^n(L_{q_\opt}(\R))}\ge \big(M(n,L)-1\big)L\e_\opt.
\end{equation}

It remains to explain why~\eqref{eq:MnL version} implies~\eqref{eq:max on N}; in fact~\eqref{eq:MnL version} is asymptotically stronger than~\eqref{eq:max on N} in terms of the lower order factor. The values of $0<\e_\opt\le \frac12$ and $1\le q_\opt\le 2$  for which~\eqref{eq:maximizers} holds are determined by
\begin{equation}\label{eq:two equation}
n^{\left(1-\frac{1}{q_\opt}\right)^2}=2q_\opt L\qquad\mathrm{and}\qquad \e=\Big(\frac{1}{n}\Big)^{\frac{1}{q_\opt}}.
\end{equation}
This assertion includes the  claim that, thanks to the assumption $5L\le\sqrt[4]{n}$, there is a unique $1\le q_{\opt}\le 2$ for which the first equation in~\eqref{eq:two equation} holds. The justification that the maximizers in~\eqref{eq:maximizers} are characterized by~\eqref{eq:two equation} consists of a (somewhat tedious) computation using elementary calculus that we omit  because we will derive~\eqref{eq:max on N} by taking the following  suboptimal setting of parameters in~\eqref{eq:sup lower 1}; we prefer this presentation despite the fact that it sacrifices a lower order improvement of~\eqref{eq:MnL version}, because it    is shorter.
\begin{equation}\label{eq:our choices sub optimal}
\e_*=\e_*(n,L)\eqdef \frac{e^{\sqrt{(\log n)\log(4L)}}}{n} \qquad\mathrm{and}\qquad q_*=q_*(n,L)\eqdef \frac{1}{1-\frac{\sqrt{\log (4L)}}{\sqrt{\log n}}}.
\end{equation}
Note that we indeed have $\e_*\le \frac12$ and $1<q_*\le 2$ because of the assumption $5L\le\sqrt[4]{n}$. Hence,
\begin{equation*}
\max_{a\in \mathcal{N}}\|F(a)-f(a)\|_{\ell_{q_*}^n(L_{q_*}(\R))}\stackrel{\eqref{eq:sup lower 1}}{\ge} \frac12 \e_*^{1-\frac{1}{q_*}}-L\e_*-Ln^{\frac{1}{q_*}-1}\stackrel{\eqref{eq:our choices sub optimal}}{=}\Big(ne^{-2\sqrt{(\log n)\log(4L)}}-1\Big)L\e_*.
\end{equation*}
This implies the desired estimate~\eqref{eq:max on N} for $\e=\e_*$ because  $5L\le\sqrt[4]{n}$.
\end{proof}

\begin{remark} The above proof of Theorem~\ref{thm:l1} does not use fine properties of the crinkled ark. It only needs a mapping from $\ell_1^n$ to an $L_q(\mu)$ space that is $\frac{1}{q}$-H\"older with constant $C_1$ on $S_{\ell_1^n}$, and which sends any pair of points in $S_{\ell_1^n}$ that are at distance at least $C_2$ to points in $L_q(\mu)$  that are at distance at least $C_3$, where $C_1,C_2,C_3>0$ are constants independent of $n\in \N$ and $1<q<2$ whose only impact is on the resulting lower order factor. Similar statements could therefore be derived for other mappings, such as the Mazur map~\cite{Maz29}. However,  it would be independently interesting to find  the best behavior in the approximation lower bound~\eqref{eq:sup lower 1} when using the crinkled ark: Is the lower order factor that this  induces in~\eqref{eq:max on N} necessary for the specific example of (powers of) the crinkled ark the we consider here?
\end{remark}

\begin{remark} In the proof of Theorem~\ref{thm:l1}, the role of $L_q(\mu)$ as the target space is only through the validity of~\eqref{eq:min max}. For this,  it suffices for the target to have Enflo type $q$ (using terminology of~\cite{BMW86}), and by the  remarkable work~\cite{IvHV20} this is equivalent to the target having type $q$ in the linear sense (recall~\eqref{eq:type cotype def}).

A quite substantial amount of bi-Lipschitz invariants is available  as an outgrowth of the Ribe program (see~\cite{Nao12,Bal13}, though by now these surveys are out of date and more such invariants are known). Due to the inherent quantitative nature of these invariants, they could in principle be used to prove the kind of non-extension results that are the topic of the present work. Indeed, the above reasoning shows how  nonlinear type is useful for this purpose (applications of nonlinear notions of type to impossibility of Lipschitz extension were found in other contexts in~\cite[page~137]{JLS86} and~\cite[Section~8]{NP11}). In the course of the research that led to the present work, we systematically examined the relevance of all of the aforementioned bi-Lipschitz invariants (of which we are aware) for the purpose of proving impossibility of either Lipschitz extension or  almost extension. This study is too lengthy to include here. It suffices to say that while some of these invariants turn out to be applicable in our context, we only succeeded to use them to derive bounds that are inferior to those that we present here. For example, we could use metric $X_p$ inequalities~\cite{NS16-Xp,Nao16-riesz} to  prove a variant of~\eqref{eq:A lower lq} which is asymptotically weaker in terms of lower-order factors (as $n\to \infty$) via a route that is completely different from how~\eqref{eq:A lower lq} is proven here. At the same time, the use of many of those invariants (such as metric cotype~\cite{MN08}) to prove (almost) non-extendability eludes us, and it would be worthwhile to obtain a principled understanding of the reason why this is so. In some cases, we do see  conceptual reasons for such difficulties. As an example, consider the metric variant of the expander Poincar\'e inequality~\cite{LLR95,Mat97,MN14}, which is commonly used to remove lower-order factors (as done specifically  for Lipschitz extension in~\cite{NR17}), so a priori one could hope to use it to sharpen Theorem~\ref{thm:l1}. However, it is impossible to realize expanders faithfully as subsets of a low-dimensional normed space; certainly not bi-Lipschitzly~\cite{Nao17}, but even weaker low-dimensional representations of expanders are impossible~\cite{Nao18,Nao19}. This appears to be an inherent obstacle to proving lower bounds on (almost) Lipschitz extension from subsets of a finite dimensional normed space that grow like a power of its dimension rather than logarithmically.
\end{remark}

\subsection{Approximate smoothing of powers of the helix}\label{sec:lp variant} We stated and proved Theorem~\ref{thm:l1} separately because this is how we show that the original (full-generality) formulation of Bourgain's almost extension theorem is optimal up to lower order factors, and also because the crinkled ark is a canonical object. But, the reasoning of the proof of Theorem~\ref{thm:l1}  can be readily adjusted to yield almost optimal (up to lower order factors) variants when the source space is $\ell_p^n$ for the range $1\le p< 2$; the only difference is replacing the crinkled ark by a $\theta$-helix. Our proof breaks down when $p=2$. Of course, Theorem~\ref{thm:L2 case} treats   the endpoint case $p=2$ even more satisfactorily (sharp up to constant factors), but we will prove Theorem~\ref{thm:L2 case} (in Section~\ref{eq:l2 proof}) via a completely different and more involved reasoning that relies on Euclidean symmetries.

Our next goal  is to briefly explain how to adapt the proof of Theorem~\ref{thm:l1} so as to obtain its generalization that we already stated in~\eqref{eq:lp variant}. Specifically, we will  show that if $A,L\ge 1$, $n\in \N$ and $1\le p< 2$ are such that
\begin{equation}\label{eq:lower n p-2}
n\ge (5L)^{\frac{4p}{(2-p)^2}},
\end{equation}
and $(L,A)$ is an $\ell_p^n$-almost extension pair, then
\begin{equation}\label{eq:repeat A lower p}
A\gtrsim Ln^{\frac{1}{p}}e^{-2\sqrt{\frac{\log (4L)}{p}}\log n}.
\end{equation}
This  is sharp up to lower order factors (as $n\to \infty$) because it is proved in~\cite{NS18-extension} that for some universal constant $C>0$, if $L>C$ and $n\in \N$, then $(L,A)$ is an $\ell_p^n$-almost extension pair, where $A\ge 1$ satisfies
$$
A\lesssim \frac{n^{\frac{1}{p}}}{L}.
$$

By~\cite[Corollary~1]{Sch38} for each $\theta\in (0,1)$ there is a curve $h_\theta:\R\to \ell_2$, called the $\theta$-helix, that satisfies
\begin{equation}\label{eq:theta helix property}
\forall s,t\in \R,\qquad \|h_\theta(s)-h_\theta(t)\|_{\ell_2}=|s-t|^\theta.
\end{equation}
For $n\in \N$ and $x=(x_1,\ldots,x_n)\in \R^n$ write $h_\theta^{\otimes n}(x)=(h_\theta(x_1),\ldots,h_\theta(x_n))\in (\ell_2)^n$. By applying~\eqref{eq:theta helix property} coordinate-wise we see that for every $0<p\le q<\infty$ we have
$$
\forall x,y\in \R^n,\qquad \big\|h_{\frac{p}{q}}^{\otimes n}(x)-h_{\frac{p}{q}}^{\otimes n}(y)\big\|_{\ell_q^n(\ell_2)}=\|x-y\|_{\ell_p^n}^{\frac{p}{q}}.
$$

Fix $0<\e\le \frac12$ and let $\NN$ be an $\e$-net of $S_{\ell_p^n}$. For  $1\le p<q\le 2$, define $f=f_{p,q,\e,n}:\NN\to \ell_q^n(\ell_2)$ by
$$
f\eqdef \e^{1-\frac{p}{q}}h_{\frac{p}{q}}^{\otimes n}.
$$
Then, $f$ is $1$-Lipschitz by reasoning mutatis mutandis as in~\eqref{eq:crinckle snowflake}. Suppose that $L,A\ge 1$ and $F:\ell_p^n\to \ell_q^n(\ell_2)$ is $L$-Lipschitz and satisfies $\|F(a)-f(a)\|_{\ell_q^n(\ell_2)}\le A\e$ for every $a\in \NN$. Our goal is proving that $A$ obeys~\eqref{eq:repeat A lower p}.

Since $F$ is $L$-Lipschitz we have analogously to~\eqref{eq:upper edges},
\begin{equation}\label{eq:upper edges helix}
\max_{\substack{x\in \{-1,1\}^n\\j\in \n}}\Big\|F\big(n^{-\frac{1}{p}}x\big)-F\big(n^{-\frac{1}{p}}(x_1,x_2,\ldots,x_{j-1},-x_j,x_{j+1},\ldots,x_n)\big)\Big\|_{\ell_q^n(\ell_2)}\le 2Ln^{-\frac{1}{p}}.
\end{equation}
The same reasoning that led to~\eqref{eq:lower diagonals} leads mutatis mutandis to the following estimate
\begin{equation}\label{eq:lower diagonals helix}
\min_{x\in \{-1,1\}^n} \Big\|F\big(n^{-\frac{1}{p}}x\big)-F\big(-n^{-\frac{1}{p}} x\big)\Big\|_{\ell_q^n(\ell_2)}\ge \e^{1-\frac{p}{q}}- 2L\e -2A\e.
\end{equation}
A substitution of~\eqref{eq:upper edges helix} and~\eqref{eq:lower diagonals helix} into~\eqref{eq:min max}, which is valid because $\ell_2$ embeds isometrically into $L_q(\R)$ (see e.g.~\cite[Chapter~III.A]{Woj91}), gives the following  bound on $A$, which holds when $0<\e\le \frac12$ and $p\le q\le 2$.
\begin{equation}\label{eq:A lower p version}
A\ge  \Big((2L)^{-1}\e^{-\frac{p}{q}}-\e^{-1}n^{\frac{1}{q}-\frac{1}{p}}-1\Big)L.
\end{equation}
We arrive at~\eqref{eq:repeat A lower p} by optimizing similarly to the conclusion of the proof of Theorem~\ref{thm:l1}. Specifically, due to~\eqref{eq:lower n p-2} we see that~\eqref{eq:repeat A lower p} is a consequence of~\eqref{eq:A lower p version} by substituting the following values of $q$ and $\e$ into~\eqref{eq:A lower p version}, while noting that indeed $0<\e\le \frac12$ and $p\le q\le 2$ thanks to~\eqref{eq:lower n p-2}.
\begin{equation*}
\e=\frac{1}{n}e^{\sqrt{\frac{\log (4L)}{p}\log n}}\qquad\mathrm{and}\qquad q=\frac{p}{1-\frac{\sqrt{p\log(4L)}}{\sqrt{\log n}}}.\tag*{\qed}
\end{equation*}

\section{Approximate stochastic retraction from the Euclidean sphere onto its net}\label{eq:l2 proof}

Our goal here is to prove Theorem~\ref{thm:L2 case} by reasoning via the dual characterization in part~${(\it 2)}$ of Proposition~\ref{prop:duality}.  Prior to doing so, we will make some preparatory comments that will be used in the proof.

Denote the standard scalar product on $\R^n$ by $\langle \cdot,\cdot\rangle :\R^n\times \R^n\to \R$, namely $\langle x,y\rangle =x_1y_1+\ldots+x_ny_n$ for  $x=(x_1,\ldots,x_n),y=(y_1,\ldots,y_n)\in \R^n$. For $d>0$, the $d$-dimensional Hausdorff measure that is induced on $\R^n$ by the $\ell_2^n$ metric will be denoted $\cH^d$. So, for any $k\in \{1,\ldots,n-1\}$ and $\rho>0$,
\begin{equation}\label{eq:sphere volume formula}
\cH^k\big(\rho S^k\big)=\frac{2\pi^{\frac{k}{2}}}{\Gamma\big(\frac{k}{2}\big)}\rho^k,
\end{equation}
where we use the natural identification ${S}^k=\{(x_1,\ldots,x_{k+1},0,\ldots,0)\in \R^n: x_1^2+\ldots+x_{k+1}^2=1\}\subset \R^n$.

For a direction $z\in {S}^{n-1}$, denote its orthogonal complement by $z^\perp=\{w\in \R^n:\ \langle w,z\rangle=0\}$, and its positive and negative half spaces by $z_+^\perp=\{w\in \R^n:\ \langle w,z\rangle>0\}$ and $z_-^\perp=\{w\in \R^n:\ \langle w,z\rangle<0\}$, respectively.

Let $\mu$ be a (signed) Borel measure of finite total variation on $\R^{n}$. Define the  {\em imbalance} of $\mu$  to be the function $\imb_\mu:{S}^{n-1}\to \R$ that associates to a direction $z\in {S}^{n-1}$ the (potentially negative) amount by which the $\mu$--mass  of the positive half space $z^\perp_+$ exceeds the $\mu$--mass  of the negative half space $z^\perp_-$, i.e.,

\begin{equation}\label{eq:def imbalance}
\imb_\mu(z)\eqdef \mu\big(z^\perp_+\big)-\mu\big(z^\perp_-\big)=\int_{\R^n} \sign\big(\langle z,a\rangle\big) \ud\mu(a),
\end{equation}
where, for the second equality in~\eqref{eq:def imbalance}, and throughout what follows, we use the convention $\sign(0)=0$.

Let $\cL\subset L_2(\cH^{n-1}\lfloor_{{S}^{n-1}})$ be the subspace of linear functions in $L_2(\cH^{n-1}\lfloor_{{S}^{n-1}})$, where  $\cH^{n-1}\lfloor_{{S}^{n-1}}$ denotes the restriction of $\cH^{n-1}$ to ${S}^{n-1}$, namely it is the surface area measure on ${S}^{n-1}$.  If the orthogonal projection from $L_2(\cH^{n-1}\lfloor_{{S}^{n-1}})$ onto $\cL$ is denoted by $\proj_\cL$, then  the imbalance of any signed Borel measure $\mu$ of finite total variation that is supported on ${S}^{n-1}$ satisfies the following identity for every $x\in {S}^{n-1}$.
\begin{equation}\label{eq:projection identity}
 \proj_\cL\big(\imb_\mu\big)(x)=\frac{2\cH^{n-2}({S}^{n-2})}{\cH^{n-1}({S}^{n-1})}\sum_{j=1}^n x_j\int_{{S}^{n-1}}a_j\ud\mu(a)=\frac{2\cH^{n-2}({S}^{n-2})}{\cH^{n-1}({S}^{n-1})}\left\langle x, \text{\tt Moment}_\mu\right\rangle,
\end{equation}
where we recall the notation~\eqref{eq:moment def}, namely the vector $\text{\tt Moment}_\mu=\int_{S^{n-1}} a\ud\mu(a)\in \R^n$ is the moment of $\mu$.

Since the (normalized) coordinate functions $z\mapsto z_1\sqrt{n/\cH^{n-1}({S}^{n-1})},\ldots,z\mapsto z_n\sqrt{n/\cH^{n-1}({S}^{n-1})}$ form an orthonormal basis of $\cL$, the verification of~\eqref{eq:projection identity} consists of the following straightforward computation for any $j\in \n$, which we include here for the sake of completeness.
\begin{align*}
\int_{{S}^{n-1}}z_j\imb_\mu(z)\ud\cH^{n-1}(z)&=\int_{{S}^{n-1}}z_j \bigg(\int_{{S}^{n-1}}\sign\big(\langle z,a\rangle\big) \ud\mu(a)\bigg)\ud\cH^{n-1}(z)\\&=\int_{{S}^{n-1}}\bigg(\int_{{S}^{n-1}}z_j\sign\big(\langle z,a\rangle\big) \ud\cH^{n-1}(z)\bigg)\ud\mu(a)\\
&=\int_{{S}^{n-1}}\bigg(\int_{-1}^1\int_{(a^\perp+ta)\cap {S}^{n-1}}z_j\sign(t) \ud\cH^{n-2}(z)\ud t\bigg)\ud\mu(a)\\
&=\int_{{S}^{n-1}}\bigg(\int_{-1}^1\sign(t)\int_{\sqrt{1-t^2}{S}^{n-2}}(w_j+ta_j) \ud\cH^{n-2}(w)\ud t\bigg)\ud\mu(a)\\
&=\int_{{S}^{n-1}}\bigg(\int_{-1}^1(1-t^2)^{\frac{n-2}{2}}|t|\cH^{n-2}\big({S}^{n-2}\big)a_j\ud t\bigg)\ud\mu(a)\\
&=\frac{2}{n}\cH^{n-2}\big({S}^{n-2}\big)\int_{{S}^{n-1}}a_j\ud\mu(a).
\end{align*}

Note that~\eqref{eq:projection identity} implies that any Borel measure $\mu$ on $S^{n-1}$ of finite total variation satisfies the following sharp\footnote{Equality holds when $\imb_\mu\in \mathcal{L}$, a sufficient condition for which is that $\mu$ is absolutely continuous with respect to $\cH^{n-1}\lfloor_{{S}^{n-1}}$ and its density belongs to $\mathcal{L}$; we did not attempt to check whether this condition is also necessary.} lower bound on the $L_2$ norm of its imbalance in terms of the Euclidean length of its moment.
\begin{equation}\label{eq:imbalance moment inequality}
\left\|\imb_\mu\right\|_{L_2(\cH^{n-1}\lfloor_{{S}^{n-1}})}\ge \frac{2\cH^{n-2}({S}^{n-2})}{\sqrt{n\cH^{n-1}({S}^{n-1})}}\left\|\text{\tt Moment}_\mu\right\|_{\ell_2^n}.
\end{equation}

\begin{proof}[Proof of Theorem~\ref{thm:L2 case}] Fix $\e>0$ whose value will be specified later so as to optimize the ensuing reasoning. Let $\NN$ be an $\e$-net of $S^{n-1}$. Suppose that $\{\mu_x\}_{x\in S^{n-1}}$ is a collection of elements of the Wasserstein-1 space $\W_1(\NN)$, i.e., each $x\in S^{n-1}$ is associated to a signed measure $\mu_x$ that is supported on $\NN$ whose total mass satisfies $\mu_x(\NN)=0$. By Proposition~\ref{prop:duality}, assuming that
\begin{equation}\label{eq:Lip L}
\forall x,y\in S^{n-1},\qquad \|\mu_x-\mu_y\|_{\W_1(\mathcal{N})}\le L\|x-y\|_{\ell_2^n},
\end{equation}
the goal of Theorem~\ref{thm:L2 case} is to demonstrate that for a judicious choice of $\e>0$ the Lipschitz condition~\eqref{eq:Lip L} entails that  any fixed probability measure $\nu$ on $\NN$ satisfies
\begin{equation}\label{eq:goal within}
A_\nu\eqdef \frac{1}{\e}\max_{a\in \NN} \|\mu_a-\bd_a+\nu\|_{\W_1(\NN)}\gtrsim \frac{\sqrt{n}}{L}.
\end{equation}

To prove~\eqref{eq:goal within},   for each direction $z\in S^{n-1}$ define $\psi_z:{S}^{n-1}\to \R$ by setting for every $x\in {S}^{n-1}$,
\begin{equation}\label{eq:our psi def}
\psi_z(x)\eqdef \imb_{\mu_x-\mu_{-x}}(z)\stackrel{\eqref{eq:def imbalance}}{=}\mu_x\big(z^\perp_+\big)-\mu_{-x}\big(z^\perp_+\big)+\mu_{-x}\big(z^\perp_-\big)-\mu_{x}\big(z^\perp_-\big).
\end{equation}
We will next deduce from~\eqref{eq:Lip L} that
\begin{equation}\label{eq:psi Lip goal}
\sup_{z\in S^{n-1}}\|\psi_z\|_{\Lip}\le \frac{4L}{\e}.
\end{equation}
Indeed, for every fixed direction $z\in S^{n-1}$ and distinct net points $a,b\in \mathcal{N}$ we have
$$
\big|\sign\big(\langle z,a\rangle\big)-\sign\big(\langle z,b\rangle\big)\big|\le 2 \le \frac{2}{\e}\|a-b\|_{\ell_2^n},
$$
because $\|a-b\|_{\ell_2^n}\ge \e$. So, the mapping $a \mapsto \sign(\langle z,a\rangle)$ is $\frac{2}{\e}$-Lipschitz on $\mathcal{N}$. Hence, for every $x,y\in S^{n-1}$,
\begin{multline*}
|\psi_z(x)-\psi_z(y)|\stackrel{\eqref{eq:def imbalance}}{=}\bigg|\int_{\mathcal{N}} \sign\big(\langle z,a\rangle\big)\ud (\mu_x-\mu_{-x}-\mu_y+\mu_{-y})(a)\bigg|\stackrel{\eqref{eq:KR}}{\le} \frac{2}{\e}\|\mu_x-\mu_{-x}-\mu_y+\mu_{-y}\|_{\W_1(\mathcal{N})}\\
\le \frac{2}{\e}\left(\|\mu_x-\mu_y\|_{\W_1(\mathcal{N})}+\|\mu_{-x}-\mu_{-y}\|_{\W_1(\mathcal{N})}\right)\stackrel{\eqref{eq:Lip L}}{\le} \frac{4L}{\e}\|x-y\|_{\ell_2^n}.
\end{multline*}
Thus we indeed have $\|\psi_z\|_{\Lip}\le \frac{4L}{\e}$ for every fixed direction $z\in {S}^{n-1}$.

Fixing $z\in {S}^{n-1}$, note that by the definition~\eqref{eq:our psi def}  $\psi_z$ is odd, and hence  $\int_{{S}^{n-1}}\psi_z\ud\cH^{n-1}=0$. Because the spectral gap of the Laplace--Beltrami operator on ${S}^{n-1}$ equals $\frac{1}{n-1}$ (see e.g.~\cite{Cha84}), we therefore have
\begin{equation}\label{eq:to integrate for getting imbablance}
\int_{{S}^{n-1}}\psi_z^2\ud\cH^{n-1}\le \frac{1}{n-1}\int_{{S}^{n-1}}\|\nabla_{{S}^{n-1}}\psi_z\|_{\ell_2^n}^2\ud\cH^{n-1}\le\frac{\cH^{n-1}({S}^{n-1})}{n-1}\|\psi_z\|_{\Lip}^2\stackrel{\eqref{eq:psi Lip goal}} {\le}  \frac{32L^2\cH^{n-1}({S}^{n-1})}{n\e^2}.
\end{equation}
By integrating~\eqref{eq:to integrate for getting imbablance} over $z\in {S}^{n-1}$ and interchanging the order of integration, we deduce that
\begin{equation}\label{eq:upper imbalance integral}
\int_{{S}^{n-1}} \big\|\imb_{\mu_x-\mu_{-x}}\big\|_{L_2(\cH^{n-1}\lfloor_{{S}^{n-1}})}^2\ud \cH^{n-1}(x)\le  \frac{32L^2\cH^{n-1}({S}^{n-1})^2}{n\e^2}.
\end{equation}
By combining~\eqref{eq:upper imbalance integral} with the Imbalance/Moment inequality~\eqref{eq:imbalance moment inequality}, we therefore have
\begin{equation}\label{eq:moment upper bound}
\int_{S^{n-1}}\big\|\text{\tt Moment}_{\mu_x}-\text{\tt Moment}_{\mu_{-x}}\big\|_{\ell_2^n}^2\ud \cH^{n-1}(x)\le \frac{8L^2\cH^{n-1}({S}^{n-1})^3}{\e^2\cH^{n-2}({S}^{n-2})^2}.
\end{equation}
As the left hand side of~\eqref{eq:moment upper bound} is at least $\cH^{n-1}({S}^{n-1})\inf_{x\in S^{n-1}} \big\|\text{\tt Moment}_{\mu_x}-\text{\tt Moment}_{\mu_{-x}}\big\|_{\ell_2^n}^2$, we conclude that
\begin{equation}\label{eq:pass to inf}
\inf_{x\in S^{n-1}} \big\|\text{\tt Moment}_{\mu_x}-\text{\tt Moment}_{\mu_{-x}}\big\|_{\ell_2^n}\le \frac{\sqrt{8}L\cH^{n-1}({S}^{n-1})}{\e\cH^{n-2}({S}^{n-2})}\le \frac{16L}{\e\sqrt{n}},
\end{equation}
where the last step of~\eqref{eq:pass to inf} follows from Stirling's formula using~\eqref{eq:sphere volume formula}.

For each $z\in S^{n-1}$ fix an arbitrary net point $a(z)\in \NN$ for which $\|z-a(z)\|_{\ell_2^n}\le \e$. Then, for any $x,y\in S^{n-1}$,
\begin{eqnarray*}
\|x-y\|_{\ell_2^n}&\le& \|a(x)-a(y)\|_{\ell_2^n}+\|x-a(x)\|_{\ell_2^n}+\|y-a(y)\|_{\ell_2^n}\\
&\le& \|a(x)-a(y)\|_{\ell_2^n}+2\e\\
&=& \big\|\text{\tt Moment}_{\bd_{a(x)}}-\text{\tt Moment}_{\bd_{a(x)}}\big\|_{\ell_2^n}+2\e\\
&\le& \big\|\text{\tt Moment}_{\mu_x}-\text{\tt Moment}_{\mu_{y}}\big\|_{\ell_2^n}+\big\|\text{\tt Moment}_{\mu_x-\mu_{a(x)}}\big\|_{\ell_2^n}+\big\|\text{\tt Moment}_{\mu_y-\mu_{a(y)}}\big\|_{\ell_2^n}
\\&&\qquad +\big\|\text{\tt Moment}_{\mu_{a(x)}-\bd_{a(x)}+\nu}-\text{\tt Moment}_{\mu_{a(y)}-\bd_{a(y)}+\nu}\big\|_{\ell_2^n}+2\e\\
&\stackrel{\eqref{eq:moment wasserstein}}{\le} & \big\|\text{\tt Moment}_{\mu_x}-\text{\tt Moment}_{\mu_{y}}\big\|_{\ell_2^n}+\|\mu_x-\mu_{a(x)}\|_{\W_1(\NN)}+\|\mu_y-\mu_{a(y)}\|_{\W_1(\NN)}
\\&&\qquad + \|\mu_{a(x)}-\bd_{a(x)}+\nu\|_{\W_1(\NN)}+\|\mu_{a(y)}-\bd_{a(y)}+\nu\|_{\W_1(\NN)}+2\e\\
&\stackrel{\eqref{eq:Lip L}\wedge \eqref{eq:goal within}}{\le}& \big\|\text{\tt Moment}_{\mu_x}-\text{\tt Moment}_{\mu_{y}}\big\|_{\ell_2^n}+2L\e+2A_\nu\e+2\e
\end{eqnarray*}
By contrasting the case $y=-x$ with~\eqref{eq:pass to inf} we conclude that
\begin{equation}\label{eq:to optimize epsilon quadratic}
2\le \frac{16L}{\e\sqrt{n}}+2(L+A_\nu+1)\e.
\end{equation}
The value of $\e$ that minimizes the right hand side of~\eqref{eq:to optimize epsilon quadratic} is $
\e_{\min}^\nu\eqdef \frac{\sqrt{8L}}{\sqrt{L+A_\nu+1}}\cdot \frac{1}{\sqrt[4]{n}}
$, using which~\eqref{eq:to optimize epsilon quadratic} gives
$$
A_\nu\ge \frac{\sqrt{n}}{32L}-L-1.
$$
This implies the desired lower bound~\eqref{eq:goal within} because of our assumption $6L\le\sqrt[4]{n}$.
\end{proof}

\begin{remark}
As an alternative route to Theorem~\ref{thm:L2 case}, we checked that by implementing a suitable adaptation of  the strategy of the proof of~\cite[Theorem~1.17]{MN13} (which requires, in particular, a  setting of parameters that differs from what was needed in~\cite{MN13}), one can obtain a different derivation of the sharp lower bound~\eqref{eq:max on N hilbert}. Even though this approach results in a more complicated example, it has the advantage that the target space $\Y$ has Banach--Mazur distance $O(1)$ to some  subspace of $\ell_1$, and therefore, in particular, it has cotype $2$. In contrast, using~\cite{NS07} one can show that the Banach--Mazur distance between $\W_1(\NN)$, which is the target  space that we use in the above proof of Theorem~\ref{thm:L2 case}, and any subspace of $\ell_1$ is at least a universal constant multiple of $\sqrt{\log n}$. It is unknown whether $\W_1(\NN)$, or even $\W_1(\ell_2)$, has finite cotype (see~\cite[Question~7]{Nao18}); this is related to the old question~\cite{Bou86} whether $\W_1(\R^2)$  has finite cotype.
\end{remark}


\bibliographystyle{alphaabbrvprelim}
\bibliography{almost-ext}

 \end{document}